\documentclass{article}

\usepackage{iclr2026_conference,times}
\usepackage{amsmath,amssymb,amsthm,mathtools,bm}
\usepackage{booktabs,array,multirow}
\usepackage{graphicx}
\usepackage{microtype}
\usepackage{enumitem}
\usepackage{xcolor}
\usepackage{hyperref}
\usepackage[nameinlink,capitalize,noabbrev]{cleveref}

\hypersetup{
  colorlinks=true,
  linkcolor=blue!55!black,
  citecolor=blue!55!black,
  urlcolor=blue!55!black,
  pdftitle={Forward Tangent-Centered Sinkhorn DRO: Exact Nominal Recovery from Optimal Transport to KL},
  pdfauthor={Junjie Guo}
}

\newtheorem{theorem}{Theorem}[section]
\newtheorem{proposition}[theorem]{Proposition}
\newtheorem{lemma}[theorem]{Lemma}

\theoremstyle{definition}

\theoremstyle{remark}

\newcommand{\R}{\mathbb{R}}
\newcommand{\E}{\mathbb{E}}

\newcommand{\one}{\mathbf{1}}
\newcommand{\simplex}{\Delta_m}
\newcommand{\cU}{\mathcal{U}}
\newcommand{\cT}{\mathcal{T}}
\newcommand{\TC}{\mathrm{TC}}
\newcommand{\KL}{\mathrm{KL}}
\newcommand{\diag}{\operatorname{Diag}}

\newcommand{\inner}[2]{\left\langle #1,#2\right\rangle}
\newcommand{\norm}[1]{\left\lVert #1\right\rVert}
\newcommand{\abs}[1]{\left\lvert #1\right\rvert}
\newcommand{\trans}{^{\mathsf T}}

\title{Forward Tangent-Centered Sinkhorn DRO:\\Exact Nominal Recovery from Optimal Transport to KL}

\author{Junjie Guo\\
Rutgers University\\
\texttt{jg1806@scarletmail.rutgers.edu}}

\iclrfinalcopy

\begin{document}
\maketitle
\lhead{}
\renewcommand{\headrulewidth}{0pt}

\begin{abstract}
Fixed-reference entropy-regularized transport yields smooth distributionally robust optimization (DRO) objectives, but its value as a function of the adversarial marginal is generally minimized at a kernel-smoothed law rather than at the nominal distribution. Consequently, the smallest raw ambiguity set need not recover nominal risk. Building on prior information-geometric plan-KL orientations, we study the \emph{forward tangent-centered} discrepancy obtained by subtracting the value and first variation at the nominal law. This is the unique affine recentering, up to the simplex gauge; for symmetric costs, its plan-KL representation coincides with the forward candidate noted by \citet{amari2018information}. The representation shows that the discrepancy dominates marginal KL, is uniquely zero at the nominal law, and is strongly convex in total variation. The resulting DRO problem retains a scalar log-sum-exp dual and closed-form conditional adversarial weights. On finite supports with symmetric zero-diagonal costs, the discrepancy converges uniformly to the unregularized transport cost at low temperature; for any finite cost, its rescaled value converges uniformly to marginal KL at high temperature. We derive its exact local Hessian, an explicit finite-sample containment radius, and a generalized chi-square calibration for multinomial data. Controlled simulations verify nominal centering, both limiting regimes, and practical plug-in coverage.
\end{abstract}

\section{Introduction}

Entropy regularization makes optimal transport (OT) smooth and computationally accessible, and it is now a standard ingredient in transport-based learning and DRO \citep{cuturi2013sinkhorn,peyre2019computational,wang2025sinkhorn}. Yet entropy has two distinct effects when one transport marginal is adversarial: it smooths the optimization problem, and it selects a baseline distribution through the reference measure. For fixed-reference Sinkhorn DRO, \citet[Remark~5]{wang2025sinkhorn} identify the zero-budget law as a kernel smoothing of the nominal distribution. Thus a set described as being ``centered'' at a nominal law $p$ can have its minimum at another law.

The issue is already visible on a finite support. For a cost matrix $C$ and temperature $\gamma>0$, let
\begin{equation}\label{eq:intro-T}
T_\gamma(q,p)=\min_{\Pi\one=q,\,\Pi\trans\one=p}
\left\{\inner{C}{\Pi}+\gamma\sum_{j,i}\Pi_{ji}(\log\Pi_{ji}-1)\right\}.
\end{equation}
If $C=0$, the minimizer over $q$ is uniform, regardless of $p$. Merely shifting the radius to make $p$ feasible does not remove this first-order preference. The natural question is therefore:
\begin{quote}
Can one remove the fixed-reference drift while preserving the convexity and log-sum-exp structure that make Sinkhorn DRO useful?
\end{quote}

We answer affirmatively by subtracting the tangent plane of $q\mapsto T_\gamma(q,p)$ at $p$, using the forward Bregman orientation dictated by the adversarial marginal. If $\Pi_\gamma^{q,p}$ is the entropic optimal plan and $\Pi_\gamma^{p,p}$ is the nominal self-plan, then
\begin{equation}\label{eq:intro-plan-kl}
D_\gamma^{\TC}(q\Vert p)
=\gamma\KL\!\left(\Pi_\gamma^{q,p}\middle\Vert\Pi_\gamma^{p,p}\right)
\ge \gamma\KL(q\Vert p).
\end{equation}
The correction therefore has an information-geometric interpretation at the level of transport plans, controls probability reweighting more sharply than Pinsker's inequality alone, and retains the original cost geometry through the optimal-plan map.

\paragraph{Contributions.}
Our contributions are threefold.
\begin{enumerate}[leftmargin=1.45em,itemsep=2pt,topsep=3pt]
\item For the forward counting-reference orientation, we characterize the raw center, prove that tangent subtraction is the unique affine correction restoring $p$, connect its symmetric-cost plan-KL form to the forward candidate of \citet{amari2018information}, and derive its KL chain-rule decomposition.
\item We derive a strong finite-dimensional DRO dual with a single scalar transport multiplier and explicit conditional adversarial weights. For symmetric zero-diagonal discrete costs, we prove a uniform OT limit as $\gamma\downarrow0$; for any finite cost, we prove a uniform KL limit for $D_\gamma^{\TC}/\gamma$ as $\gamma\to\infty$.
\item We identify the local metric through a conditional-softmax covariance matrix. This yields an explicit nonasymptotic multinomial radius and an asymptotically exact generalized chi-square calibration; controlled simulations evaluate the latter plug-in procedure.
\end{enumerate}

The analysis is intentionally finite-dimensional. It covers categorical laws, scenario reduction, predictive or importance-weighted supports, and other settings in which nominal probabilities are nonuniform. It does \emph{not} provide a continuous-support sampling theorem or account for a learned support or ground cost.

\paragraph{Closest work.}
\citet[Eqs.~49--50]{amari2018information} introduce both KL orientations between optimal plans and analyze the self-plan-first orientation. For symmetric costs, their dual candidate with the nominal reference, after transposing plans and matching regularization conventions, coincides with the forward plan-KL quantity in \eqref{eq:intro-plan-kl}. We therefore do not claim that candidate itself as new. Our contribution is to identify it as the unique tangent recentering of fixed/counting-reference transport and develop the consequences needed for DRO. Bregman divergences generated by one-marginal product-reference entropic OT have also appeared as mirror maps \citep{kerimkulov2025mirror}; the exact relationship is given in \eqref{eq:product-decomposition}. General Sinkhorn-DRO duality is due to \citet{wang2025sinkhorn}, and regularized Wasserstein DRO is studied by \citet{azizian2023regularization,azizian2023exact}.

Symmetric Sinkhorn divergences instead target two-sample comparison and interpolate between OT and a kernel discrepancy \citep{feydy2019interpolating,genevay2019sample,sejourne2019sinkhorn,aubinfrankowski2026debiasing}. The $n^{-1}$ scale and generalized chi-square mechanism are known for finite-space Sinkhorn losses \citep{bigot2019central,goldfeld2024limit}, and explicit pseudoinverse geometry is available for the symmetric divergence \citep{lavenant2026riemannian}. Our distinctive results are the forward fixed-reference Sinkhorn-DRO reformulation and worst-case law, its assumption-explicit low-temperature OT and general high-temperature KL limits, and the compact conditional-softmax Hessian used for one-sided ambiguity-radius calibration. See the appendix for a precise algebraic comparison. Regularized OT's broader Bregman geometry is surveyed by \citet{dessein2018regularized}.

\section{Fixed-reference drift and canonical recentering}

\subsection{Setup and the raw center}

Let
$\simplex=\{q\in\R_+^m:\one\trans q=1\}$ and
$\simplex^\circ=\{q\in\simplex:q_j>0\}$.
Throughout, $p\in\simplex^\circ$, $C\in\R_+^{m\times m}$ is finite, and $0\log0=0$. The coupling polytope is
$\Gamma(q,p)=\{\Pi\ge0:\Pi\one=q,\Pi\trans\one=p\}$,
and $T_\gamma$ is defined by \eqref{eq:intro-T}. This is entropy relative to counting measure, up to a constant. Multiplying a fixed reference matrix by positive row or column factors only adds affine terms in $q$ and constants in $p$; \cref{prop:reference-invariance} below shows that tangent centering removes them. A reference that itself depends on the varying $q$, such as $qp\trans$, defines a different geometry.

The raw minimizer is explicit.
\begin{proposition}[Raw fixed-reference center]\label{prop:raw-center}
For $K_{ji}=e^{-C_{ji}/\gamma}$ and $Z_i=\sum_kK_{ki}$,
\begin{equation}\label{eq:raw-center}
\Pi_{ji}^{\circ}=p_i\frac{K_{ji}}{Z_i},
\qquad
q_{\gamma,j}^{\circ}(p)=\sum_i p_i\frac{K_{ji}}{Z_i}
\end{equation}
uniquely minimize $T_\gamma(q,p)$ over $\Pi$ and $q$, respectively. Hence $q_\gamma^\circ(p)\to m^{-1}\one$ as $\gamma\to\infty$. If $C_{ii}=0$ and $\Delta_C=\min_{i\ne j}C_{ji}>0$, then
$\norm{q_\gamma^\circ(p)-p}_1\le2(m-1)e^{-\Delta_C/\gamma}$.
\end{proposition}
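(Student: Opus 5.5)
The approach is to fold the minimization over $q$ into the transport problem. The joint problem $\min_q T_\gamma(q,p)$ is then a minimization over all $\Pi\ge0$ with $\Pi\trans\one=p$, since the row marginal $q=\Pi\one$ is free. The resulting objective $\inner{C}{\Pi}+\gamma\sum_{j,i}\Pi_{ji}(\log\Pi_{ji}-1)$ separates across columns $i$. For each $i$ we must minimize $\sum_j\Pi_{ji}\bigl(C_{ji}+\gamma\log\Pi_{ji}-\gamma\bigr)$ subject to $\sum_j\Pi_{ji}=p_i$ and $\Pi_{ji}\ge0$.

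Writing $\Pi_{ji}=p_i r_j$ with $r$ in the simplex, the objective becomes, up to constants depending only on $p_i$, equal to $\gamma p_i\KL(r\Vert K_{\cdot i}/Z_i)$. Equivalently, the Lagrangian stationarity condition $C_{ji}+\gamma\log\Pi_{ji}=\lambda_i$ forces $\Pi_{ji}\propto K_{ji}$. Strict convexity of $x\log x$ on each column, together with $p_i>0$, gives a unique minimizer $\Pi^\circ_{ji}=p_iK_{ji}/Z_i$. Every entry is positive, so the boundary is never active. The objective is finite on a compact set, so the minimum is attained.

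For uniqueness of the $q$-minimizer, I use the identity $\min_q T_\gamma(q,p)=\min_{\Pi}(\cdots)$. Any minimizer $q^*$ of $T_\gamma(\cdot,p)$ has an optimal plan $\Pi^*\in\Gamma(q^*,p)$. That plan attains the joint minimum, so $\Pi^*=\Pi^\circ$ and hence $q^*=\Pi^\circ\one=q^\circ_\gamma(p)$. This gives \eqref{eq:raw-center}.

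The two limits then follow directly from this formula.
\begin{itemize}
\item As $\gamma\to\infty$, $K_{ji}\to1$ because $C$ is finite, so $K_{ji}/Z_i\to1/m$ for each $i$. Since $\sum_ip_i=1$, it follows that $q^\circ_\gamma(p)\to m^{-1}\one$.
\item For the low-temperature bound, write $q^\circ-p=\sum_ip_i\bigl(K_{\cdot i}/Z_i-e_i\bigr)$. The triangle inequality gives $\norm{q^\circ-p}_1\le\sum_ip_i\norm{K_{\cdot i}/Z_i-e_i}_1$.
\end{itemize}

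To bound each column term, note that $C_{ii}=0$ gives $K_{ii}=1$. Set $s_i=\sum_{k\ne i}K_{ki}\le(m-1)e^{-\Delta_C/\gamma}$. Then $\norm{K_{\cdot i}/Z_i-e_i}_1=(1-\tfrac{1}{1+s_i})+\tfrac{s_i}{1+s_i}=\tfrac{2s_i}{1+s_i}\le2s_i$. Averaging over $i$ with weights $p_i$ yields the stated bound $2(m-1)e^{-\Delta_C/\gamma}$.

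The only step needing care is the uniqueness argument linking the joint minimizer to the $q$-minimizer, which the reduction above handles. The rest is routine computation.
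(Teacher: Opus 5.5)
Your proof is correct and follows essentially the same route as the paper: you drop the row-marginal constraint, separate the problem over columns, identify each normalized column as the Gibbs column $K_{\cdot i}/Z_i$, and bound the $\ell_1$ error by twice the off-diagonal mass. Your explicit argument that any minimizer $q^*$ inherits the unique joint-optimal plan, and your exact computation $2s_i/(1+s_i)\le 2s_i$, only make explicit steps that the paper leaves implicit.
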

The formula is the discrete kernel smoothing already implicit in the zero-budget representation of \citet{wang2025sinkhorn}. Our goal is not to claim this smoothing as new, but to remove it canonically without sacrificing tractability.

\subsection{Tangent centering and a plan-level identity}

For fixed $p$, write $F_{\gamma,p}(q)=T_\gamma(q,p)$. This function is strictly convex on $\simplex$ and smooth on $\simplex^\circ$. Its relative gradient is defined up to a multiple of $\one$. Choose any representative $\nu_{\gamma,p}\in\partial F_{\gamma,p}(p)$ and define
\begin{equation}\label{eq:tc-def}
D_\gamma^{\TC}(q\Vert p)
:=F_{\gamma,p}(q)-F_{\gamma,p}(p)
-\nu_{\gamma,p}\trans(q-p).
\end{equation}
Thus $D_\gamma^{\TC}$ is an \emph{anchored} Bregman remainder: the convex generator $F_{\gamma,p}$ depends on the anchor $p$.

\begin{proposition}[Canonical affine correction]\label{prop:canonical}
For $a\in\R^m$, let
$D_a(q,p)=F_{\gamma,p}(q)-F_{\gamma,p}(p)-a\trans(q-p)$.
Then $p$ minimizes $D_a(\cdot,p)$ on $\simplex$ if and only if
$a-\nu_{\gamma,p}\in\operatorname{span}\{\one\}$.
Whenever this holds, strict convexity makes $p$ the unique minimizer. Hence \eqref{eq:tc-def} is the unique affine recentering on the simplex, up to gauge.
\end{proposition}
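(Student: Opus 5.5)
The plan is to reduce the statement to the first-order optimality condition for a convex function on the simplex at an interior point. For fixed $p$ and $a$, the map $q\mapsto D_a(q,p)$ is $F_{\gamma,p}$ plus an affine function of $q$. Since $F_{\gamma,p}$ is strictly convex on $\simplex$ and smooth on $\simplex^\circ$, the same holds for $D_a(\cdot,p)$, and its gradient at $p$ is $\nabla F_{\gamma,p}(p)-a$. Because $p\in\simplex^\circ$, the point $p$ is relatively interior, so the feasible directions at $p$ form the whole tangent space $\{v:\one\trans v=0\}$, taken in both signs. Convexity gives that $p$ minimizes $D_a(\cdot,p)$ on $\simplex$ if and only if the directional derivative $(\nabla F_{\gamma,p}(p)-a)\trans v$ vanishes for every $v$ with $\one\trans v=0$. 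Equivalently, $\nabla F_{\gamma,p}(p)-a\in(\one^\perp)^\perp=\operatorname{span}\{\one\}$.

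Next I identify this with $\nu_{\gamma,p}$. By definition, $\nu_{\gamma,p}$ is any representative of the relative gradient at $p$, so $\nu_{\gamma,p}-\nabla F_{\gamma,p}(p)\in\operatorname{span}\{\one\}$. Substituting this into the criterion above gives exactly $a-\nu_{\gamma,p}\in\operatorname{span}\{\one\}$. The same computation also shows that \eqref{eq:tc-def} does not depend on which representative is chosen: if $a=\nu_{\gamma,p}+c\one$, then $a\trans(q-p)=\nu_{\gamma,p}\trans(q-p)$ because $\one\trans(q-p)=0$. Hence every admissible $a$ yields the same function $D_a=D_\gamma^{\TC}$, which is the claimed uniqueness up to gauge.

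For uniqueness of the minimizer, $D_a(\cdot,p)$ is strictly convex on $\simplex$ and attains its minimum at $p$. If a second minimizer $q'\ne p$ existed, strict convexity would make the midpoint $(p+q')/2$ strictly better, which is a contradiction.

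The main obstacle is justifying that the relative gradient exists at $p$ and that the subdifferential is exactly $\nabla F_{\gamma,p}(p)+\operatorname{span}\{\one\}$. This requires differentiability of $q\mapsto T_\gamma(q,p)$ at interior $q$. I would obtain it from entropic OT duality: the row dual potential $f$ of the Sinkhorn problem, unique up to the constant shared with the column potential, is a gradient, and strict positivity of the entropic plan ensures uniqueness of potentials modulo that constant. The paper already asserts this smoothness, so I would take it as given. I would also note that $\partial F_{\gamma,p}(p)$, in the sense of the restriction to the affine hull of $\simplex$, is then a single class modulo $\one$, which is all the argument needs.
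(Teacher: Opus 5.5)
Your proposal is correct and matches the paper's proof essentially step for step. First-order optimality of the convex function $D_a(\cdot,p)$ at the relatively interior point $p$ reduces to $(\nu_{\gamma,p}-a)\trans v=0$ for all $v\in\cT$, i.e.\ $a-\nu_{\gamma,p}\in\cT^\perp=\operatorname{span}\{\one\}$, and strict convexity of $F_{\gamma,p}$ gives uniqueness of the minimizer. Your additional remarks only make explicit what the paper leaves implicit: gauge-independence of $D_a$, and the relative subdifferential being a single class modulo $\one$ (via the semi-dual of \cref{lem:semidual}).
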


The next theorem is the main structural result. For distributions $P,Q$ on a finite set, we use
$\KL(P\Vert Q)=\sum_xP_x\log(P_x/Q_x)$.
\begin{theorem}[Exact plan-KL geometry]\label{thm:plan-kl}
Let $\Pi_\gamma^{q,p}$ be the unique optimizer defining $T_\gamma(q,p)$. For every $q\in\simplex$,
\begin{align}
D_\gamma^{\TC}(q\Vert p)
&=\gamma\KL\!\left(\Pi_\gamma^{q,p}\middle\Vert\Pi_\gamma^{p,p}\right)\label{eq:plan-kl}\\
&=\gamma\KL(q\Vert p)
+\gamma\sum_{j:q_j>0}q_j
\KL\!\left(\Pi_\gamma^{q,p}(\cdot\mid j)
\middle\Vert\Pi_\gamma^{p,p}(\cdot\mid j)\right).\label{eq:chain-kl}
\end{align}
More generally, for $r\in\simplex^\circ$, the Bregman remainder of the fixed generator $F_{\gamma,p}$ satisfies
\begin{equation}\label{eq:global-plan-kl}
B_{F_{\gamma,p}}(q,r)
=\gamma\KL(\Pi_\gamma^{q,p}\Vert\Pi_\gamma^{r,p})
\ge\gamma\KL(q\Vert r)
\ge\frac{\gamma}{2}\norm{q-r}_1^2.
\end{equation}
Consequently, $D_\gamma^{\TC}(\cdot\Vert p)$ is convex, uniquely zero at $p$, and
\begin{equation}\label{eq:kl-tv-lower}
D_\gamma^{\TC}(q\Vert p)
\ge\gamma\KL(q\Vert p)
\ge\frac{\gamma}{2}\norm{q-p}_1^2.
\end{equation}
Thus, for $\rho\ge0$, $\cU_{\rho,\gamma}^{\TC}(p):=\{q:D_\gamma^{\TC}(q\Vert p)\le\rho\}$ is nonempty, compact, and convex, with
$\cU_{0,\gamma}^{\TC}(p)=\{p\}$ and
$\cU_{\rho,\gamma}^{\TC}(p)\subseteq\{q:\KL(q\Vert p)\le\rho/\gamma\}$.
\end{theorem}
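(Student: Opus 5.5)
The plan is to prove the general identity \eqref{eq:global-plan-kl} first. The statements for $D_\gamma^{\TC}$ then follow by setting $r=p$.

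\textbf{Step 1: structure of the optimal plan.} For $r\in\simplex^\circ$, strong duality for the entropic problem gives potentials $(f,g)$ with
\[
\Pi_\gamma^{r,p}=\diag(e^{f/\gamma})K\diag(e^{g/\gamma}),
\qquad
T_\gamma(r,p)=f\trans r+g\trans p-\gamma\one\trans\Pi_\gamma^{r,p}\one .
\]
Since $\one\trans\Pi\one=1$ on $\Gamma(r,p)$, the last term is the constant $-\gamma$. By the envelope theorem, or by the dual representation $T_\gamma(q,p)=\sup_{f,g}\{f\trans q+g\trans p-\gamma\sum_{j,i}K_{ji}e^{(f_j+g_i)/\gamma}\}$, the vector $f$ is a (relative) gradient of $F_{\gamma,p}$ at $r$. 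The gauge $f\mapsto f+c\one$, $g\mapsto g-c\one$ is harmless because $\one\trans(q-r)=0$. By \cref{prop:canonical}, any representative of $\nu_{\gamma,p}$ differs from $f$ only by a multiple of $\one$, so we may take $\nu=f$.

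\textbf{Step 2: the Bregman computation.} Write $\Pi^q=\Pi_\gamma^{q,p}$ and $\Pi^r=\Pi_\gamma^{r,p}$. The primal objective gives $F_{\gamma,p}(q)=\inner{C}{\Pi^q}+\gamma\sum\Pi^q(\log\Pi^q-1)$. From Step 1,
\[
\log\Pi^r_{ji}=\frac{f_j+g_i-C_{ji}}{\gamma}.
\]
Hence
\[
\gamma\KL(\Pi^q\Vert\Pi^r)=\gamma\sum\Pi^q\log\Pi^q+\inner{C}{\Pi^q}-f\trans q-g\trans p .
\]
This uses only that the marginals of $\Pi^q$ are $q$ and $p$; if $q$ has zero entries, the terms with $0\log 0$ vanish. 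Subtracting $F_{\gamma,p}(r)=f\trans r+g\trans p-\gamma$ and $f\trans(q-r)$ from $F_{\gamma,p}(q)$, and using $\gamma\sum\Pi^q=\gamma$, gives exactly $B_{F_{\gamma,p}}(q,r)=\gamma\KL(\Pi^q\Vert\Pi^r)$. This is the main step. The obstacle is making the gradient identification rigorous, which needs dual attainment at interior $r$ and the boundary convention at $q\in\partial\simplex$. I would handle it via the dual sup formula: it shows that $F_{\gamma,p}(q)\ge f\trans q+g\trans p-\gamma$ for all $q$, with equality at $r$, so $f$ is a subgradient directly.

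\textbf{Step 3: the chain rule and the inequalities.} Decompose $\Pi(j,i)=q_j\Pi(i\mid j)$. The KL chain rule gives \eqref{eq:chain-kl}, whose first marginal term is $\KL(q\Vert r)$. Nonnegativity of the conditional terms (equivalently, the data-processing inequality) gives $\ge\gamma\KL(q\Vert r)$, and Pinsker's inequality gives the TV bound. Setting $r=p$ yields \eqref{eq:plan-kl} and \eqref{eq:kl-tv-lower}.

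\textbf{Step 4: the consequences.} $D_\gamma^{\TC}(\cdot\Vert p)$ is convex, since it is $F_{\gamma,p}$ minus an affine function. It vanishes only at $p$ by \eqref{eq:kl-tv-lower}. The sublevel set $\cU^{\TC}_{\rho,\gamma}(p)$ is convex, contains $p$, and is closed because $F_{\gamma,p}$ is continuous on the compact simplex (the optimal value of a strictly convex continuous program over a continuously varying polytope). It is therefore compact. The KL containment follows from \eqref{eq:kl-tv-lower}, and $\rho=0$ gives $\{p\}$.
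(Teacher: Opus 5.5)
Your proof is correct, but it reaches the key identity by a different route than the paper.

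The paper works in the semi-dual. It treats $F_{\gamma,p}$ as the conjugate of the log-partition function $A_{\gamma,p}$ and applies the conjugate-Bregman identity $D_{F_{\gamma,p}}(q,p)=D_{A_{\gamma,p}}(\nu,u_q)$. It then expands each column's log-sum-exp to obtain $\gamma\sum_i p_i\KL(s_i(u_q)\Vert s_i(\nu))$, i.e.\ the plan KL disintegrated along the common column marginal $p$. A semi-dual potential $u_q$ exists only for interior $q$, because $\nabla A_{\gamma,p}$ always has full support. The paper therefore needs a separate approximation $q_t=(1-t)q+tp$, together with continuity of the entropic optimizer, to reach boundary $q$.

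Your computation is purely primal. It uses only the Gibbs form of the reference plan $\Pi_\gamma^{r,p}$ and the marginals of $\Pi_\gamma^{q,p}$, so it holds verbatim for every $q\in\simplex$ with no limiting step. In fact it holds for every $\Pi\in\Gamma(q,p)$ with the entropic objective in place of $F_{\gamma,p}(q)$. Minimizing over $\Pi$ gives the I-projection statement $F_{\gamma,p}(q)-F_{\gamma,p}(r)-f\trans(q-r)=\gamma\min_{\Pi\in\Gamma(q,p)}\KL(\Pi\Vert\Pi_\gamma^{r,p})$.

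Because that identity is pure algebra, nonnegativity of KL already shows that the Sinkhorn row potential $f$ is a subgradient of $F_{\gamma,p}$ at $r$. So the obstacle you flag in Step 2 resolves itself. What remains is matching $f$ with $\nu_{\gamma,p}$ modulo $\one$, which you correctly obtain from \cref{prop:canonical}. Your argument also transfers unchanged to any fixed positive reference $R$.

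Each route has its advantage. Yours is shorter and handles the boundary directly. The paper's yields the conditional-softmax representation in terms of $s_i(\cdot)$ and $A_{\gamma,p}$, which is exactly the machinery it reuses for the Hessian $H_{\gamma,p}=\gamma M_{\gamma,p}^{\dagger}$ in \cref{thm:hessian}.
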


Identity \eqref{eq:plan-kl} says that tangent centering replaces an arbitrary additive convention by a genuine relative entropy between two \emph{optimized} couplings. The residual term in \eqref{eq:chain-kl} measures how much the nominal scenario conditioned on an adversarial state changes beyond marginal reweighting.

This orientation can also be related exactly to prior product-reference EOT Bregman geometry. Define
$h_{\gamma,p}(q)=\min_{\Pi\in\Gamma(q,p)}\{\inner{C}{\Pi}+\gamma\KL(\Pi\Vert qp\trans)\}$
and let $B_{h_{\gamma,p}}(q,p)$ be its forward Bregman remainder. Marginal identities give
\begin{equation}\label{eq:product-decomposition}
D_\gamma^{\TC}(q\Vert p)
=B_{h_{\gamma,p}}(q,p)+\gamma\KL(q\Vert p).
\end{equation}
Thus the conditional term in \eqref{eq:chain-kl} is precisely the product-reference remainder, while counting-reference entropy supplies the marginal KL and strong convexity. For symmetric costs, \eqref{eq:plan-kl} is the dual candidate recorded in \citet[Eq.~50]{amari2018information}; our results identify its tangent-centering origin and develop its DRO geometry. Equation~\eqref{eq:product-decomposition} likewise distinguishes it from the general product-reference mirror map of \citet{kerimkulov2025mirror}.

\begin{proposition}[Fixed-reference invariance]\label{prop:reference-invariance}
Let $R\in\R_{++}^{m\times m}$ be a fixed entropy reference and let
$R'=\diag(r)R\diag(c)$ for $r,c>0$. The tangent-centered discrepancies generated by $R$ and $R'$ coincide. Equivalently, adding arbitrary row and column potentials to $C$ leaves $D_\gamma^{\TC}$ unchanged.
\end{proposition}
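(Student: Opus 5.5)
Since the tangent-centered discrepancy depends only on $F_{\gamma,p}$ up to affine functions of $q$ (by \cref{prop:canonical}, any affine term $a\trans q+b(p)$ is absorbed into the gradient representative and the constant, so the Bregman remainder is unchanged), the plan is to show that passing from $R$ to $R'=\diag(r)R\diag(c)$ changes the generator only by such an affine term.

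\textbf{Step 1: rewrite the objective.} With a general fixed reference $R$, I would write the generator as
\[
F^R_{\gamma,p}(q)=\min_{\Pi\in\Gamma(q,p)}\Bigl\{\inner{C}{\Pi}+\gamma\sum_{j,i}\Pi_{ji}\bigl(\log(\Pi_{ji}/R_{ji})-1\bigr)\Bigr\}.
\]
Substituting $\log R'_{ji}=\log r_j+\log R_{ji}+\log c_i$ gives
\[
-\gamma\sum_{j,i}\Pi_{ji}\log R'_{ji}=-\gamma\sum_{j,i}\Pi_{ji}\log R_{ji}-\gamma\sum_j q_j\log r_j-\gamma\sum_i p_i\log c_i
\]
for every $\Pi\in\Gamma(q,p)$. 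The key point is that the marginal constraints turn the row and column terms into functions of $q$ and $p$ alone.

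\textbf{Step 2: conclude $F^{R'}=F^R+$ affine.} Since the extra terms are constant on the feasible polytope, the minimizer is unchanged and
\[
F^{R'}_{\gamma,p}(q)=F^R_{\gamma,p}(q)-\gamma(\log r)\trans q-\gamma(\log c)\trans p.
\]
For fixed $p$ this is affine in $q$. Subdifferentials then shift by $-\gamma\log r$, so $\nu^{R'}=\nu^R-\gamma\log r$ is a valid representative. The gauge freedom $\operatorname{span}\{\one\}$ is harmless because $\one\trans(q-p)=0$ on the simplex.

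\textbf{Step 3: plug into \eqref{eq:tc-def}.} In $F(q)-F(p)-\nu\trans(q-p)$, the shift contributes
\[
-\gamma(\log r)\trans(q-p)+\gamma(\log r)\trans(q-p)=0,
\]
and the $p$-constant $-\gamma(\log c)\trans p$ cancels between $F(q)$ and $F(p)$. Hence the two discrepancies coincide.

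\textbf{Step 4: the potential reformulation.} Adding potentials, $C'_{ji}=C_{ji}+u_j+v_i$, gives $\inner{C'}{\Pi}=\inner{C}{\Pi}+u\trans q+v\trans p$ on $\Gamma(q,p)$. Equivalently, this is the reference change $R'=\diag(e^{-u/\gamma})R\diag(e^{-v/\gamma})$, since $C$ and $-\gamma\log R$ enter only through $C-\gamma\log R$. The same argument therefore applies.

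\textbf{Main obstacle.} I expect the only delicate point to be boundary $q$, where $F$ need not be differentiable. This is resolved because the definition only uses a subgradient at the interior point $p\in\simplex^\circ$, and the affine-shift identity for $F$ holds pointwise for all $q\in\simplex$.
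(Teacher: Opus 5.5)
Your proposal is correct and follows the paper's proof essentially verbatim. On $\Gamma(q,p)$ the row and column factors of $R'$ contribute $-\gamma\sum_j q_j\log r_j-\gamma\sum_i p_i\log c_i$, which is affine in $q$ plus a constant in $p$ and therefore cancels in the anchored Bregman remainder, and adding potentials to $C$ is the same as rescaling $R$ by $e^{-a_j/\gamma}e^{-b_i/\gamma}$. The fact you attribute to \cref{prop:canonical} is not what that proposition states, but your explicit cancellation in Step 3 proves it directly, so nothing is missing.
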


\section{A tractable distributionally robust objective}

Let $L(\theta)=(L_1(\theta),\ldots,L_m(\theta))\trans$ and define
\begin{equation}\label{eq:robust-risk}
\Phi_{\rho,\gamma}^{\TC}(\theta)
=\sup_{q\in\cU_{\rho,\gamma}^{\TC}(p)}q\trans L(\theta).
\end{equation}
Normalize the gauge of $\nu=\nu_{\gamma,p}$ so that
$p\trans\nu=T_\gamma(p,p)$, and set
$\bar C_{ji}=C_{ji}-\nu_j$.
Then the nominal self-plan has corrected budget zero, and \eqref{eq:robust-risk} is equivalently a maximization over $\Pi\trans\one=p$ subject to
$\inner{\bar C}{\Pi}+\gamma\sum_{j,i}\Pi_{ji}(\log\Pi_{ji}-1)\le\rho$.

\begin{theorem}[Scalar strong dual]\label{thm:dual}
Suppose $\rho>0$ and $L_j(\theta)$ is finite. Define
$Z_i(\theta,\lambda):=p_i^{-1}\sum_{j=1}^m
\exp\!\left([L_j(\theta)/\lambda-\bar C_{ji}]/\gamma\right)$. Then
\begin{equation}\label{eq:dual}
\Phi_{\rho,\gamma}^{\TC}(\theta)
=\inf_{\lambda>0}\lambda\!\left[
\rho+\gamma+\gamma\sum_{i=1}^mp_i\log Z_i(\theta,\lambda)\right].
\end{equation}
If the infimum is attained at $\lambda^*>0$, the unique worst-case coupling is
\begin{equation}\label{eq:worst-plan}
\begin{aligned}
\Pi_{ji}^*&=p_i
\frac{\exp\!\left((L_j(\theta)-\lambda^*\bar C_{ji})/(\lambda^*\gamma)\right)}
{\sum_k\exp\!\left((L_k(\theta)-\lambda^*\bar C_{ki})/(\lambda^*\gamma)\right)},\\[-2pt]
q_j^*&=\sum_i\Pi_{ji}^*.
\end{aligned}
\end{equation}
If each $L_j$ is convex in $\theta$, the joint objective before minimizing over $\lambda$ is convex in $(\theta,\lambda)$. If the losses are differentiable and $q^*$ is unique, then
$\nabla_\theta\Phi_{\rho,\gamma}^{\TC}(\theta)=\sum_jq_j^*\nabla L_j(\theta)$.
\end{theorem}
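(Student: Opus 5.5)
The plan is to rewrite \eqref{eq:robust-risk} as a convex program over couplings with one scalar constraint, apply Lagrangian duality with a single multiplier, and solve the inner maximization column by column in closed form.

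\textbf{Step 1: lifting to couplings.} With the gauge $p\trans\nu=T_\gamma(p,p)$, definition \eqref{eq:tc-def} reduces to $D_\gamma^{\TC}(q\Vert p)=T_\gamma(q,p)-\nu\trans q$. Write $H(\Pi)=\sum_{j,i}\Pi_{ji}(\log\Pi_{ji}-1)$. For any $\Pi\in\Gamma(q,p)$ we have $\nu\trans q=\inner{\nu\one\trans}{\Pi}$, so $\inner{C}{\Pi}+\gamma H(\Pi)-\nu\trans q=\inner{\bar C}{\Pi}+\gamma H(\Pi)$. Hence $q\in\cU_{\rho,\gamma}^{\TC}(p)$ if and only if some $\Pi\ge0$ with $\Pi\trans\one=p$ and $\Pi\one=q$ satisfies $\inner{\bar C}{\Pi}+\gamma H(\Pi)\le\rho$. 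Since $q\trans L=\sum_{j,i}\Pi_{ji}L_j$, $\Phi^{\TC}_{\rho,\gamma}$ equals the supremum of this linear functional over the compact convex set
\[
\{\Pi\ge0:\ \Pi\trans\one=p,\ \inner{\bar C}{\Pi}+\gamma H(\Pi)\le\rho\}.
\]

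\textbf{Step 2: Slater point and duality.} The self-plan $\Pi_\gamma^{p,p}$ has strictly positive entries (Sinkhorn form) and constraint value $T_\gamma(p,p)-p\trans\nu=0<\rho$, so it is a Slater point relative to the affine constraints. Lagrangian duality for this finite-dimensional convex program then gives
\[
\Phi=\inf_{\lambda\ge0}\Big\{\lambda\rho+\sup_{\Pi\ge0,\ \Pi\trans\one=p}\textstyle\sum_{j,i}\Pi_{ji}\big(L_j-\lambda\bar C_{ji}\big)-\lambda\gamma H(\Pi)\Big\},
\]
with the dual infimum attained.

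\textbf{Step 3: closed-form inner supremum.} For $\lambda>0$ the inner problem separates over columns $i$. Each column is a maximization of a linear term plus entropy over $\{x\ge0:\one\trans x=p_i\}$. The Gibbs variational formula gives the value
\[
\lambda\gamma p_i\Big[\log\textstyle\sum_j e^{(L_j-\lambda\bar C_{ji})/(\lambda\gamma)}-\log p_i+1\Big]
\]
and the unique maximizer is the softmax column in \eqref{eq:worst-plan}. Summing over $i$ and using $\sum_ip_i=1$ gives exactly $\lambda[\rho+\gamma+\gamma\sum_ip_i\log Z_i]$.

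\textbf{Step 4: the boundary $\lambda=0$.} At $\lambda=0$ the Lagrangian value is $\max_jL_j$. As $\lambda\downarrow0$, log-sum-exp asymptotics give $\lambda\gamma\log\sum_j e^{(L_j-\lambda\bar C_{ji})/(\lambda\gamma)}\to\max_jL_j$ in each column, so the dual objective tends to $\max_j L_j$. Therefore the infimum over $\lambda\ge0$ equals the infimum over $\lambda>0$.

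\textbf{Step 5: worst-case coupling.} If the infimum is attained at $\lambda^*>0$, complementary slackness and saddle-point theory show that any primal optimizer maximizes the Lagrangian at $\lambda^*$. That maximizer is unique by strict concavity of $-H$, which yields \eqref{eq:worst-plan}.

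\textbf{Step 6: joint convexity and the gradient.} Each term $\lambda\log\sum_j\exp((L_j/\lambda-\bar C_{ji})/\gamma)$ is the perspective of a convex, coordinatewise nondecreasing log-sum-exp function of $L$. It is therefore jointly convex in $(L,\lambda)$ and nondecreasing in $L$. Composing with the convex maps $\theta\mapsto L_j(\theta)$ preserves convexity, and the remaining terms are linear in $\lambda$. For the gradient, the primal feasible set in $q$ does not depend on $\theta$ and is compact, so Danskin's theorem applies. With a unique maximizer $q^*$ it gives $\nabla_\theta\Phi=\sum_jq_j^*\nabla L_j$.

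\textbf{Main obstacle.} The delicate point is Step 2 together with Step 4: certifying Slater in the relative interior so that strong duality holds, and showing the $\lambda\to0$ edge changes neither the dual value nor the identification of the worst-case plan. I would handle this through the explicit positive self-plan and the log-sum-exp limit above.
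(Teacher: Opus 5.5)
Your proposal is correct and follows essentially the same route as the paper: the same lift to couplings under the gauge $p\trans\nu=T_\gamma(p,p)$, Slater via the strictly positive self-plan, a single scalar multiplier solved columnwise, perspective convexity, and Danskin for the gradient. The only differences are cosmetic. You keep the column constraints $\Pi\trans\one=p$ inside the inner problem and apply the Gibbs variational formula, whereas the paper dualizes them with $\beta_i$ and then eliminates $\beta_i$ in closed form. You also handle the $\lambda\downarrow0$ boundary explicitly through the log-sum-exp limit, where the paper only invokes a lower-semicontinuous boundary interpretation.
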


The self-potential is computed once by Sinkhorn scaling. Each subsequent dual evaluation costs $O(m^2)$ for a dense kernel and uses stable columnwise log-sum-exp operations. Formula \eqref{eq:worst-plan} is conditional on the scalar multiplier; the $\lambda\downarrow0$ boundary has the usual hard-max interpretation. At $\rho=0$, \cref{thm:plan-kl} gives the exact identity
$\Phi_{0,\gamma}^{\TC}(\theta)=p\trans L(\theta)$ for every $\gamma>0$.

\begin{table}[t]
\caption{Three ways to treat fixed-reference entropic bias. ``Simple dual'' means the raw scalar log-sum-exp form, possibly after a cost shift.}
\label{tab:comparison}
\centering
\small
\begin{tabular}{@{}>{\raggedright\arraybackslash}p{1.15in}ccc@{}}
\toprule
 & Raw $T_\gamma$ & Symmetric Sinkhorn & Tangent-centered \\
\midrule
Nominal is zero/minimizer & generally no & yes$^\dagger$ & yes \\
One-sided convexity & yes & conditional$^\dagger$ & yes \\
Simple DRO dual & yes & generally no & yes \\
High-$\gamma$ geometry & reference-driven & kernel/MMD$^\dagger$ & KL after scaling \\
\bottomrule
\end{tabular}
\par\vspace{2pt}
\parbox{0.97\linewidth}{\footnotesize $^\dagger$\,Under the kernel/cost assumptions of \citet{feydy2019interpolating,aubinfrankowski2026debiasing}.}
\end{table}

\section{A continuum from discrete OT to KL}

Let
$W_C(q,p)=\min_{\Pi\in\Gamma(q,p)}\inner{C}{\Pi}$.
We call this a discrete OT cost; it is a Wasserstein cost when $C$ is induced by an appropriate ground metric. The two temperature limits are uniform, which is important because the adversary also varies with $\gamma$.

\begin{theorem}[Uniform limiting geometry]\label{thm:limits}
Assume first that $C=C\trans$, $C_{ii}=0$, and $p_i\ge p_{\min}>0$.
Set $\varepsilon_\gamma:=2\gamma[1+2\log m+\log(\sqrt m/p_{\min})]$.
Then, for every $\gamma>0$,
\begin{equation}\label{eq:low-bound}
\sup_{q\in\simplex}
\abs{D_\gamma^{\TC}(q\Vert p)-W_C(q,p)}
\le \varepsilon_\gamma.
\end{equation}
For any finite $C$ and $p\in\simplex^\circ$,
\begin{equation}\label{eq:high-limit}
\sup_{q\in\simplex}
\abs{\gamma^{-1}D_\gamma^{\TC}(q\Vert p)-\KL(q\Vert p)}
\longrightarrow0
\qquad(\gamma\to\infty).
\end{equation}
\end{theorem}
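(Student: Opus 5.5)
For the low-temperature bound \eqref{eq:low-bound} we compare each of the three terms in \eqref{eq:tc-def} with its unregularized counterpart. For the high-temperature limit \eqref{eq:high-limit} we use the chain rule \eqref{eq:chain-kl} of \cref{thm:plan-kl} and show that the conditional term vanishes uniformly in $q$.

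\emph{Low temperature.} First, for any coupling $\Pi\in\Gamma(q,p)$, the quantity $\sum_{j,i}\Pi_{ji}\log\Pi_{ji}$ is minus the Shannon entropy of a law on $m^2$ atoms. It therefore lies in $[-2\log m,0]$, so the entropy term lies in $[-\gamma(1+2\log m),-\gamma]$. Minimizing over $\Pi$ gives
\[
W_C(q,p)-\gamma(1+2\log m)\le T_\gamma(q,p)\le W_C(q,p)-\gamma
\]
uniformly in $q$. Since $C_{ii}=0$, we have $W_C(p,p)=0$. Hence $F_{\gamma,p}(q)-F_{\gamma,p}(p)$ is within $2\gamma\log m+\gamma$ of $W_C(q,p)$.

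Second, we must control the linear term $\nu_{\gamma,p}\trans(q-p)$. By gauge invariance, $\abs{\nu\trans(q-p)}\le\tfrac12\operatorname{osc}(\nu)\norm{q-p}_1\le\operatorname{osc}(\nu)$. We take $\nu=f$, the row potential of the self-problem. Because $C=C\trans$ and the marginals agree, the Sinkhorn potentials can be chosen symmetric, $f=g$, giving
\[
\Pi^{p,p}_{ji}=\exp\bigl((f_j+f_i-C_{ji})/\gamma\bigr).
\]
The diagonal entry then gives $e^{2f_j/\gamma}=\Pi_{jj}\le p_j$, so $e^{f_j/\gamma}\le\sqrt{p_j}$ and in particular $f_j\le0$. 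The row sums give
\[
p_j\le e^{f_j/\gamma}\sum_i e^{f_i/\gamma}\le e^{f_j/\gamma}\sum_i\sqrt{p_i}\le\sqrt m\,e^{f_j/\gamma},
\]
so $f_j\ge\gamma\log(p_{\min}/\sqrt m)$. Hence $\operatorname{osc}(f)\le\gamma\log(\sqrt m/p_{\min})$.

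Collecting the pieces bounds the total error by a constant multiple of $\gamma[1+2\log m+\log(\sqrt m/p_{\min})]$. The factor $2$ in $\varepsilon_\gamma$ leaves room for the bookkeeping. This potential bound is the only delicate step, and it is exactly where symmetry and the zero diagonal are used.

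\emph{High temperature.} By \eqref{eq:chain-kl},
\[
\gamma^{-1}D_\gamma^{\TC}(q\Vert p)-\KL(q\Vert p)=\sum_{j:q_j>0}q_j\KL\bigl(\Pi_\gamma^{q,p}(\cdot\mid j)\Vert\Pi_\gamma^{p,p}(\cdot\mid j)\bigr)\ge0,
\]
so it suffices to bound each conditional KL uniformly. Write the entropic optimum in Schrödinger form $\Pi_{ji}=a_jb_iK_{ji}$, with $K_{ji}=e^{-C_{ji}/\gamma}\in[e^{-c},1]$ and $c=\max C/\gamma$. The row-$j$ conditional is then
\[
\pi(i\mid j)=\frac{b_iK_{ji}}{\sum_kb_kK_{jk}},
\]
which lies within a factor $e^{\pm c}$ of $\beta_i:=b_i/\sum_kb_k$. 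The column constraint gives $p_i=\sum_jq_j\pi(i\mid j)$, so $p_i$ is also within $e^{\pm c}$ of $\beta_i$. Therefore $\pi(i\mid j)/p_i\in[e^{-2c},e^{2c}]$, and this holds for both $q$ and the nominal $p$, whatever $q$ is, including boundary points.

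The likelihood ratio between the two conditionals therefore lies in $[e^{-4c},e^{4c}]$, so each conditional KL is at most $4c=4\max C/\gamma$. Since this bound does not depend on $q$, the supremum in \eqref{eq:high-limit} is $O(\max C/\gamma)\to0$. This argument needs only finiteness of $C$ and $p\in\simplex^\circ$, as claimed.
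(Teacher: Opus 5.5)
Your low-temperature argument is the paper's. It uses the same entropy sandwich $W_C-\gamma(1+2\log m)\le T_\gamma\le W_C-\gamma$ and the same symmetric self-potential bound $p_j/\sqrt m\le e^{f_j/\gamma}\le\sqrt{p_j}$ (\cref{lem:self-potential-app}). You pair $T_\gamma(q,p)$ with $T_\gamma(p,p)$ and use the oscillation of $\nu$ rather than $\norm{\nu}_\infty$, so your pieces sum to $\gamma[1+2\log m+\log(\sqrt m/p_{\min})]=\varepsilon_\gamma/2$. You should state that sum explicitly rather than ``a constant multiple.''

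The high-temperature part takes a genuinely different route. The paper writes $T_\gamma(q,p)=\gamma[h(q)+h(p)-1]+R_\gamma(q,p)$ with $q\trans Cp-M_C^2/(2\gamma)\le R_\gamma\le q\trans Cp$, which gives uniform convergence of $T_\gamma/\gamma$. It transfers this to the relative gradients at $p$ by a convexity sandwich of difference quotients, and concludes that the Bregman remainders converge uniformly. That argument is soft: it never uses the plan-KL identity and gives no rate for the gradient step.

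You instead start from \eqref{eq:chain-kl}, so the error is exactly the averaged conditional KL. The scaling form $\Pi=\diag(a)K\diag(b)$ with $K_{ji}\in[e^{-M_C/\gamma},1]$ then traps every row conditional of both plans within a factor $e^{\pm2M_C/\gamma}$ of $p$. This yields the explicit one-sided bound $0\le\gamma^{-1}D_\gamma^{\TC}(q\Vert p)-\KL(q\Vert p)\le4M_C/\gamma$. That is stronger than the stated limit, and it shows exactly where the counting (separable) reference matters: with a nonseparable reference the conditionals would not collapse to $p$.

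The cost is that you rely on \cref{thm:plan-kl} holding on the closed simplex, which the paper supplies by continuity. Rows with $q_j=0$ have $a_j=0$ and simply drop out of the sum.
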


The scaling in \eqref{eq:high-limit} is essential. At fixed $\rho$, \eqref{eq:kl-tv-lower} implies that $\cU_{\rho,\gamma}^{\TC}(p)$ collapses to $\{p\}$ as $\gamma\to\infty$. A nondegenerate KL limit uses $\rho=\gamma\tau$: for every $\tau>0$, the sets
$\cU_{\gamma\tau,\gamma}^{\TC}(p)$ converge in Hausdorff distance to
$\{q:\KL(q\Vert p)\le\tau\}$.

The uniform bounds also transfer directly to robust risks. If
$\sup_\theta\norm{L(\theta)}_\infty\le B$ and $\varepsilon_\gamma<\rho$, then
\begin{equation}\label{eq:risk-low-bound}
\sup_\theta\abs{\Phi_{\rho,\gamma}^{\TC}(\theta)
-\sup_{q:W_C(q,p)\le\rho}q\trans L(\theta)}
\le\frac{2B\varepsilon_\gamma}{\rho}.
\end{equation}
Likewise, at radius $\gamma\tau$, bounded-loss robust risks converge uniformly to KL-DRO risks. Thus $\gamma$ controls a genuine modeling continuum, not merely numerical smoothing.

\section{Local geometry and statistical calibration}

The semi-dual log-partition function is
\begin{equation}\label{eq:A}
A_{\gamma,p}(u)=\gamma\sum_i p_i
\log\sum_j\exp((u_j-C_{ji})/\gamma),
\quad
s_{ji}(u)=\frac{e^{(u_j-C_{ji})/\gamma}}
{\sum_ke^{(u_k-C_{ki})/\gamma}}.
\end{equation}
The self-potential satisfies $\sum_i p_i s_i(\nu_{\gamma,p})=p$. Define
\begin{equation}\label{eq:M}
M_{\gamma,p}=\sum_i p_i
\left[\diag(s_i(\nu_{\gamma,p}))
-s_i(\nu_{\gamma,p})s_i(\nu_{\gamma,p})\trans\right],
\qquad
\cT=\{v:\one\trans v=0\}.
\end{equation}

\begin{theorem}[Local metric]\label{thm:hessian}
$M_{\gamma,p}$ is positive definite on $\cT$, and the intrinsic Hessian of $q\mapsto T_\gamma(q,p)$ at $q=p$ is
\begin{equation}\label{eq:H}
H_{\gamma,p}=\gamma M_{\gamma,p}^{\dagger}.
\end{equation}
Here $\dagger$ denotes the Moore--Penrose pseudoinverse, equivalently inversion on $\cT$.
Consequently,
$D_\gamma^{\TC}(q\Vert p)=\tfrac12(q-p)\trans H_{\gamma,p}(q-p)+o(\norm{q-p}_2^2)$.
Moreover, for every $v\in\cT$,
\begin{equation}\label{eq:fisher-lower}
v\trans H_{\gamma,p}v\ge\gamma\sum_j\frac{v_j^2}{p_j}.
\end{equation}
\end{theorem}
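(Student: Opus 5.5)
The plan is to work through the semi-dual: view $F_{\gamma,p}$ as the convex conjugate of $A_{\gamma,p}$, so that its Hessian is the inverse of the Hessian of $A_{\gamma,p}$ on the relevant subspace. First I would record the semi-dual identity. By standard entropic OT duality with fixed column marginal $p$, eliminating the column potential $v_i$ gives
\[
F_{\gamma,p}(q)=\sup_{u\in\R^m}\bigl\{u\trans q-A_{\gamma,p}(u)\bigr\}+\mathrm{const},
\]
where the constant depends only on $p$ (the $p_i\log p_i$ terms and the $-\gamma$ from the $\Pi(\log\Pi-1)$ convention). The maximizer $u$ satisfies $\nabla A_{\gamma,p}(u)=\sum_ip_is_i(u)=q$, and at $q=p$ this is exactly the self-potential $\nu_{\gamma,p}$. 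This also recovers $\nu_{\gamma,p}\in\partial F_{\gamma,p}(p)$.

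Next I would differentiate $A_{\gamma,p}$ twice. Since each summand is $\gamma$ times a log-sum-exp of $(u-C_{\cdot i})/\gamma$, we get
\[
\nabla^2A_{\gamma,p}(u)=\gamma^{-1}\sum_ip_i\bigl[\diag(s_i)-s_is_i\trans\bigr]=\gamma^{-1}M_{\gamma,p}
\]
at $u=\nu_{\gamma,p}$. For positive definiteness on $\cT$, each $s_i$ has strictly positive entries because $C$ is finite. Hence $v\trans(\diag(s_i)-s_is_i\trans)v=\operatorname{Var}_{s_i}(v)$, which vanishes only if $v$ is constant. On $\cT$ that forces $v=0$. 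The kernel of $M_{\gamma,p}$ is exactly $\operatorname{span}\{\one\}$, and this matches the gauge invariance $A(u+c\one)=A(u)+c$.

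The main obstacle is the conjugate-duality step on the simplex, where both the domain and the gradient gauge are degenerate. I would restrict $A$ to a complement of $\one$, say $\cT$, where it is strictly convex and smooth, and restrict $q$ to the affine hull of $\simplex$. The map $u\mapsto\nabla A(u)$ from $\cT$ onto $\simplex^\circ$ has derivative $\gamma^{-1}M$ restricted to $\cT$, which is invertible. By the inverse function theorem the maximizer $u(q)$ is smooth near $p$, with $Du(q)=\gamma M^{-1}$ on $\cT$. Since $\nabla F=u(q)$ modulo $\one$ (envelope theorem), the intrinsic Hessian on $\cT$ is $\gamma (M|_\cT)^{-1}=\gamma M^\dagger$, because $M$ is symmetric with kernel $\operatorname{span}\{\one\}$. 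The quadratic expansion of $D_\gamma^{\TC}$ then follows from Taylor's theorem, since the value and first variation at $p$ have been subtracted and $q-p\in\cT$.

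For \eqref{eq:fisher-lower}, there are two routes. One is to take limits in \eqref{eq:kl-tv-lower} from \cref{thm:plan-kl}. Setting $q=p+tv$, we have $D\ge\gamma\KL(q\Vert p)=\tfrac{\gamma t^2}{2}\sum_j v_j^2/p_j+o(t^2)$, and comparing the $t^2$ coefficients gives the claim. The alternative is a direct matrix inequality, $M\preceq\diag(p)-pp\trans$ on $\cT$ in the Loewner sense, using the law of total variance: $\sum_ip_i\operatorname{Var}_{s_i}\le\operatorname{Var}_p$ because $\sum_ip_is_i=p$. Inverting on $\cT$ then gives $M^\dagger\succeq(\diag(p)-pp\trans)^\dagger$, and $v\trans(\diag(p)-pp\trans)^\dagger v=\sum_jv_j^2/p_j$ for $v\in\cT$. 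I would present the first route since it uses \cref{thm:plan-kl} directly.
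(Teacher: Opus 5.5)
Your proposal is correct and follows essentially the same route as the paper. You compute the semi-dual Hessian $\nabla^2A_{\gamma,p}(\nu_{\gamma,p})=\gamma^{-1}M_{\gamma,p}$, get positive definiteness on $\cT$ from the variance argument, invert via conjugate duality, and prove \eqref{eq:fisher-lower} by the same two arguments (second-order expansion of $D_\gamma^{\TC}\ge\gamma\KL$, or the law of total covariance), with your inverse-function-theorem handling of the $\one$-gauge making explicit what the paper compresses into ``their intrinsic Hessians are inverses.''
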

The lower bound is the local counterpart of $D_\gamma^{\TC}\ge\gamma\KL$ and follows also from the law of total covariance.

Now suppose $p^\star\in\simplex^\circ$ generates
$N\sim\operatorname{Multinomial}(n,p^\star)$. To keep the nominal law interior, use the add-$a$ estimator
$\widetilde p_{n,j}=(N_j+a)/(n+ma)$, $a>0$.

\begin{theorem}[Finite-sample and asymptotic radii]\label{thm:calibration}
Assume $p_j^\star\ge\underline p>0$ and $0<\delta<1$, let $M_C=\max_{j,i}C_{ji}$, and define
\begin{equation}\label{eq:rn}
r_{n,\delta,a}
=\frac{n}{n+ma}\sqrt{\frac{m}{2n}\log\frac{2m}{\delta}}
+\frac{a\sqrt{m(m-1)}}{n+ma}.
\end{equation}
If $r_{n,\delta,a}\le\underline p/2$, then with probability at least $1-\delta$,
\begin{equation}\label{eq:fs-radius}
D_\gamma^{\TC}(p^\star\Vert\widetilde p_n)
\le \rho_{n,\delta,\gamma}^{\mathrm{FS}}
:=\frac{\gamma e^{2M_C/\gamma}}{\underline p}
r_{n,\delta,a}^2.
\end{equation}
For fixed $(m,a,\gamma)$,
\begin{equation}\label{eq:asymptotic}
\begin{aligned}
2nD_\gamma^{\TC}(p^\star\Vert\widetilde p_n)
&\ \Rightarrow\ Z\trans H_{\gamma,p^\star}Z,\\[-2pt]
Z&\sim\mathcal N(0,\Sigma^\star),\qquad
\Sigma^\star=\diag(p^\star)-p^\star(p^\star)\trans.
\end{aligned}
\end{equation}
If $c_{1-\delta}(\gamma,p^\star)$ is the corresponding quantile, then
$c_{1-\delta}/(2n)$ has asymptotic coverage $1-\delta$. Replacing
$(p^\star,H,\Sigma^\star)$ by their add-$a$ plug-in estimates is consistent. If the conditional quantile is estimated by Monte Carlo, the number of Gaussian draws must satisfy $B_n\to\infty$.
\end{theorem}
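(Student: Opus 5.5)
The finite-sample bound (fs-radius) has three ingredients: a high-probability $\ell_2$ bound on $\widetilde p_n-p^\star$, lower bounds on the coordinates of $\widetilde p_n$, and a quadratic upper bound on $D_\gamma^{\TC}(\cdot\Vert\widetilde p_n)$ near its anchor.

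\textbf{Concentration.} For each coordinate, Hoeffding applied to $N_j/n$ gives
\[
\abs{N_j/n-p^\star_j}\le\sqrt{\log(2m/\delta)/(2n)}
\]
with probability at least $1-\delta/m$. A union bound over $j$ then yields $\norm{N/n-p^\star}_2\le\sqrt{(m/2n)\log(2m/\delta)}$. Next write
\[
\widetilde p_n-p^\star=\tfrac{n}{n+ma}\,(N/n-p^\star)+\tfrac{ma}{n+ma}\,(m^{-1}\one-p^\star).
\]
Since $\norm{m^{-1}\one-p^\star}_2\le\sqrt{(m-1)/m}$, this gives $\norm{\widetilde p_n-p^\star}_2\le r_{n,\delta,a}$. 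Under $r\le\underline p/2$, every coordinate satisfies $\widetilde p_{n,j}\ge\underline p/2$.

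\textbf{Quadratic upper bound on the discrepancy.} I would use the plan-KL identity (plan-kl) with anchor $\widetilde p=\widetilde p_n$. The aim is
\[
\gamma\KL(\Pi^{q,\widetilde p}\Vert\Pi^{\widetilde p,\widetilde p})\le\gamma\chi^2(\Pi^{q,\widetilde p}\Vert\Pi^{\widetilde p,\widetilde p}),
\]
followed by a bound on the $\chi^2$ term.

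A cleaner route is via Theorem~\ref{thm:hessian}. Along the segment $q_t=\widetilde p+t(q-\widetilde p)$, the Bregman remainder equals $\int_0^1(1-t)\,v\trans H_{\gamma,\widetilde p}(q_t)v\,dt$, where $H(q_t)=\gamma M(q_t)^\dagger$ and $M$ is the conditional-softmax covariance at the potential of $q_t$. So it suffices to show $M(q_t)\succeq c\,I$ on $\cT$.

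The entropic plan has the form $\Pi_{ji}=e^{(u_j+w_i-C_{ji})/\gamma}$. Its conditional law $s_i(\cdot)$ therefore satisfies
\[
s_{ji}\ge \frac{q_j}{\max_{k}q_k}\,e^{-M_C/\gamma}\ \text{-type bounds};
\]
more precisely, ratios of entries across columns are controlled by $e^{\pm M_C/\gamma}$. Hence $s_{ji}\ge q_je^{-2M_C/\gamma}\cdot(\text{const})$. A softmax covariance whose probabilities dominate $e^{-2M_C/\gamma}q$ satisfies $v\trans Mv\ge e^{-2M_C/\gamma}\min_j q_j\norm{v}_2^2$-ish on $\cT$. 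This is the convexity-of-variance/total-covariance argument: $\operatorname{Var}_s(f)\ge \kappa\operatorname{Var}_q(f)$ whenever $s\ge\kappa q$ up to mean shift.

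On the segment $q_{t,j}\ge\underline p/2$, this yields $v\trans H v\le \gamma e^{2M_C/\gamma}\norm{v}_2^2/(\underline p/2)$. Integrating with the factor $\int(1-t)=1/2$ gives exactly $\gamma e^{2M_C/\gamma}r^2/\underline p$.

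\textbf{Main obstacle.} I expect the hardest step to be this uniform eigenvalue lower bound on $M$. The key is to show that for each column $i$ and each pair $j,k$,
\[
s_{ji}/s_{ki}\ge (q_j/q_k)e^{-2M_C/\gamma}.
\]
I would compare against the column average $q_j=\sum_i p_is_{ji}$, using $e^{-C/\gamma}\in[e^{-M_C/\gamma},1]$.

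\textbf{Asymptotics.} Write $\widetilde p_n=p^\star+n^{-1/2}Z_n+O(1/n)$ with $Z_n\Rightarrow\mathcal N(0,\Sigma^\star)$ by the multinomial CLT. From Theorem~\ref{thm:hessian},
\[
D_\gamma^{\TC}(p^\star\Vert\widetilde p_n)=\tfrac12 Z_n\trans H_{\gamma,\widetilde p_n}Z_n/n+o_P(1/n),
\]
where I use continuity of $H_{\gamma,p}$ in the anchor: the potentials are smooth by the implicit function theorem, since $M\succ0$ on $\cT$. The continuous mapping theorem then gives (asymptotic). Coverage follows provided the limit law is continuous at its quantile; this holds because $Z\trans HZ$ is a nondegenerate generalized chi-square.

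Plug-in consistency follows from $\widetilde p_n\to p^\star$ almost surely, continuity of $(p\mapsto H,\Sigma)$, and continuity of quantiles of generalized chi-squares in their weights. The Monte Carlo quantile converges by Glivenko–Cantelli once $B_n\to\infty$.
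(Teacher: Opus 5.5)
Your proposal is correct and follows the paper's proof essentially step for step. It uses the same coordinatewise Hoeffding/union bound and add-$a$ decomposition to get $\norm{\widetilde p_n-p^\star}_2\le r_{n,\delta,a}$, and the same uniform curvature bound $\gamma e^{2M_C/\gamma}/\epsilon$ on $\simplex(\epsilon)$ (lower-bound the conditional softmax probabilities, then invert the covariance on $\cT$) inside a Taylor remainder along the segment; the asymptotic part likewise matches, with the same second-order Taylor/Slutsky argument for the generalized chi-square limit and the same continuity/Glivenko--Cantelli reasoning for plug-in and Monte Carlo quantiles.

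The only difference is cosmetic. You obtain $s_{ji}\ge e^{-2M_C/\gamma}q_j$ from the within-column inequality $s_{ji}/s_{ki}\ge (q_j/q_k)e^{-2M_C/\gamma}$. That inequality is correct and is the precise form you need; the looser ``$e^{\pm M_C/\gamma}$'' heuristics you wrote first are not, and should be dropped. The paper instead writes $\Pi=\diag(a)K\diag(b)$ and uses $AB\le e^{M_C/\gamma}$ together with $a_j\ge\epsilon/B$.
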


The nonzero eigenvalues of
$(\Sigma^\star)^{1/2}H_{\gamma,p^\star}(\Sigma^\star)^{1/2}$ give the representation
$Z\trans HZ\overset{d}=\sum_{r=1}^{m-1}\lambda_r\chi_{1,r}^2$.
The second-order delta-method mechanism follows \citet{bigot2019central}; the distinctive ingredient here is the explicit forward Hessian \eqref{eq:H} and its use in calibrating a one-sided DRO set.
As $\gamma\to\infty$, $H_{\gamma,p}/\gamma$ approaches the Fisher metric and the normalized limit becomes $\chi^2_{m-1}$; for $C=0$, this statement is exact at every $\gamma$.

\section{Controlled validation}

We test centering, the temperature limits, and the asymptotic plug-in calibration. All reported values were generated by \texttt{anc/reproduce.py} with seed 20260725.

\paragraph{Centering and limiting geometry.}
For $p=(0.70,0.20,0.10)$ on locations $(0,1,2)$ with squared-distance cost, \cref{fig:geometry}a shows the raw-center displacement from \eqref{eq:raw-center}. It grows from $2.1\times10^{-9}$ at $\gamma=0.05$ to $0.704$ at $\gamma=20$, while the tangent-centered minimizer remains exactly $p$. For $q=(0.45,0.35,0.20)$, \cref{fig:geometry}b solves the entropic transport plans and evaluates the exact plan-KL identity: the corrected discrepancy approaches $W_C$ at low temperature and $\gamma\KL(q\Vert p)$ at high temperature, as predicted by \cref{thm:limits}.

\begin{figure}[t]
\centering
\includegraphics[width=\linewidth]{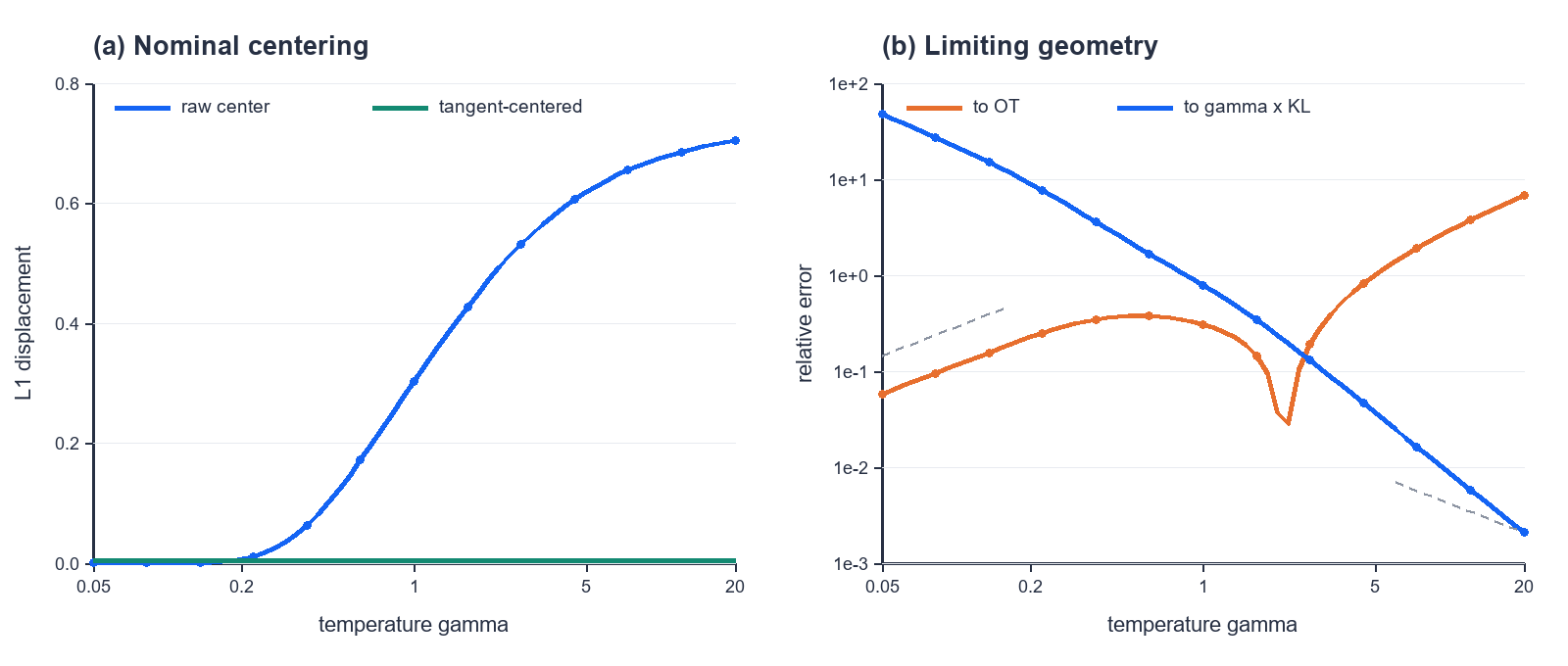}
\caption{Controlled validation. (a) Entropy moves the raw fixed-reference center toward uniform, whereas tangent centering fixes the unique minimizer at $p$. (b) The orange curve is $\abs{D_\gamma^{\TC}-W_C}/W_C$ and the blue curve is $\abs{D_\gamma^{\TC}/\gamma-\KL}/\KL$; dashed lines are slope-$+1$ and slope-$-1$ guides.}
\label{fig:geometry}
\end{figure}

\paragraph{Practical plug-in coverage.}
We use
$p^\star=(0.40,0.25,0.15,0.12,0.08)$,
$x_j=(j-1)/4$, $C_{ji}=(x_j-x_i)^2$, and $a=1/2$.
For each sample, both $H$ and $\Sigma$ are evaluated at $\widetilde p_n$; the $0.95$ generalized chi-square quantile uses a fixed bank of 5,000 Gaussian draws. \Cref{tab:coverage} reports 2,000 independent multinomial repetitions. Coverage is within roughly three Monte Carlo standard errors of $0.95$ in every configuration, including the more curved $\gamma=5$ regime.

\begin{table}[t]
\caption{Plug-in coverage of the nominal $95\%$ radius. Parentheses give Monte Carlo standard errors; the final column is the median data-dependent radius.}
\label{tab:coverage}
\centering
\small
\begin{tabular}{@{}rrrr@{}}
\toprule
$\gamma$ & $n$ & Coverage (s.e.) & Median radius \\
\midrule
0.2 & 50  & 0.9520 (0.0048) & 0.02499 \\
0.2 & 100 & 0.9545 (0.0047) & 0.01244 \\
0.2 & 500 & 0.9470 (0.0050) & 0.00248 \\
1.0 & 50  & 0.9395 (0.0053) & 0.09606 \\
1.0 & 100 & 0.9495 (0.0049) & 0.04803 \\
1.0 & 500 & 0.9565 (0.0046) & 0.00961 \\
5.0 & 50  & 0.9355 (0.0055) & 0.47440 \\
5.0 & 100 & 0.9535 (0.0047) & 0.23720 \\
5.0 & 500 & 0.9475 (0.0050) & 0.04744 \\
\bottomrule
\end{tabular}
\end{table}

\paragraph{DRO primal--dual consistency.}
For $p=(0.55,0.30,0.15)$, $L=(0.10,1.20,-0.40)$, $\gamma=0.7$, and $\rho=0.12$, scalar minimization of \eqref{eq:dual} gives $\lambda^*=1.310730393$. Equation~\eqref{eq:worst-plan} recovers $q^*=(0.395776,0.541719,0.062505)$ with $D_\gamma^{\TC}(q^*\Vert p)=0.119999996$. The primal and dual values are $0.664638662$ and $0.664638668$, an absolute gap of $5.8\times10^{-9}$.

This table evaluates the asymptotic plug-in radius, not the conservative finite-sample radius \eqref{eq:fs-radius}. At $\delta=0.05$, \eqref{eq:rn} gives $r_{n,\delta,a}=0.533,0.377,0.166$ for $n=50,100,500$, all larger than the sufficient threshold $\underline p/2=0.04$. The experiments validate algebra, limits, and practical asymptotic calibration; they do not establish superiority on a downstream benchmark. The method is most relevant when the finite support is decision-relevant and its nominal weights are nonuniform.

\section{Limitations and conclusion}

Tangent centering cleanly separates nominal weights from fixed-reference entropic drift. It is canonical among affine corrections, admits the exact plan-KL representation \eqref{eq:plan-kl}, preserves a scalar Sinkhorn-DRO dual, and connects discrete OT to KL through temperature limits under the stated cost assumptions. Its local metric also supports both conservative and asymptotically exact ambiguity radii.

The guarantees are conditional on a known finite support and fixed cost. Support discretization, learned representations, and cost estimation require additional error terms or sample splitting. Dense evaluation costs $O(m^2)$, while a full Hessian pseudoinverse costs $O(m^3)$; sparse kernels and Hessian-vector methods are needed at large $m$. Finally, tangent centering is deliberately asymmetric. Symmetric Sinkhorn divergences remain preferable for two-sample comparison, whereas DRO has a distinguished nominal anchor.

\subsubsection*{Reproducibility Statement}
All assumptions are stated in the main text, complete proofs are provided in the appendix, and the supplemental script regenerates every numerical value and figure from a fixed seed using only NumPy and Pillow. The appendix records tolerances, repetition counts, and quantile construction.

\clearpage
\bibliography{references}
\bibliographystyle{iclr2026_conference}

\clearpage
\appendix
\section{Convex-analytic preliminaries}\label{app:preliminaries}

This appendix supplies complete proofs and implementation details. All derivatives on the simplex are relative to its affine hull. Adding a multiple of $\one$ to a potential never changes a pairing with $q-p$.

\subsection{Semi-dual and smoothness}

Define $A_{\gamma,p}$ and $s_i(u)$ as in \eqref{eq:A}.

\begin{lemma}[Semi-dual representation]\label{lem:semidual}
For $q,p\in\simplex^\circ$,
\begin{equation}\label{eq:semidual-app}
T_\gamma(q,p)
=\sup_{u\in\R^m}\{u\trans q-A_{\gamma,p}(u)\}
+\gamma\sum_i p_i\log p_i-\gamma.
\end{equation}
Moreover,
\begin{equation}\label{eq:grad-A-app}
\nabla A_{\gamma,p}(u)=\sum_i p_i s_i(u),
\qquad
\nabla^2 A_{\gamma,p}(u)
=\frac1\gamma\sum_i p_i[\diag(s_i(u))-s_i(u)s_i(u)\trans].
\end{equation}
The maximizing potential is unique modulo $\operatorname{span}\{\one\}$.
\end{lemma}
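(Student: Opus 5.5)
Our plan is to derive the semi-dual by Lagrangian duality on the row-marginal constraint only, keeping the column constraint $\Pi\trans\one=p$ inside the inner problem, where it decouples over columns.

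\textbf{Step 1: the partial Lagrangian.} Introduce a multiplier $u\in\R^m$ for $\Pi\one=q$ and write
\[
T_\gamma(q,p)=\min_{\Pi\ge0,\,\Pi\trans\one=p}\ \sup_{u}\Big\{\inner{C}{\Pi}+\gamma\sum_{j,i}\Pi_{ji}(\log\Pi_{ji}-1)+u\trans(q-\Pi\one)\Big\}.
\]
The feasible set $\{\Pi\ge0:\Pi\trans\one=p\}$ is compact and convex. The objective is convex in $\Pi$ and affine in $u$. Because $q\in\simplex^\circ$, Slater's condition holds: the product plan $qp\trans$ is strictly positive and feasible. Hence min and sup can be exchanged (Sion, or finite-dimensional Lagrange duality with only linear equality constraints).

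\textbf{Step 2: the inner minimization.} For fixed $u$ the inner problem splits over columns $i$: minimize $\sum_j\Pi_{ji}(C_{ji}-u_j)+\gamma\Pi_{ji}(\log\Pi_{ji}-1)$ subject to $\sum_j\Pi_{ji}=p_i$. This is a Gibbs variational problem. Its minimizer is $\Pi_{ji}=p_i s_{ji}(u)$, and its value is
\[
-\gamma p_i\log\sum_j e^{(u_j-C_{ji})/\gamma}+\gamma p_i\log p_i-\gamma p_i.
\]
We will confirm this value either by the Donsker--Varadhan formula or by direct substitution. Summing over $i$ gives $-A_{\gamma,p}(u)+\gamma\sum_ip_i\log p_i-\gamma$, and adding $u\trans q$ yields \eqref{eq:semidual-app}.

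\textbf{Step 3: derivatives of $A_{\gamma,p}$.} The gradient and Hessian formulas \eqref{eq:grad-A-app} are the standard log-sum-exp derivatives applied column by column. Differentiating $\gamma p_i\log\sum_j e^{(u_j-C_{ji})/\gamma}$ gives $p_i s_i(u)$. Differentiating once more gives $\gamma^{-1}p_i(\diag(s_i)-s_is_i\trans)$.

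\textbf{Step 4: existence and uniqueness of the maximizer.} This is the main obstacle. Uniqueness modulo $\one$ comes from the kernel of the Hessian. Each block $\diag(s_i)-s_is_i\trans$ is the covariance of a categorical law with full support, so its kernel is exactly $\operatorname{span}\{\one\}$; a sum of such positive semidefinite matrices has the same kernel. Therefore $A_{\gamma,p}$ is strictly convex on $\cT$, and it is invariant under $u\mapsto u+c\one$ because $\one\trans q=1=\sum_ip_i$.

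For existence, we restrict to $\cT$ and show coercivity of $A_{\gamma,p}(u)-u\trans q$. Its recession function on $\cT$ is $\sum_ip_i\max_ju_j-u\trans q=\max_ju_j-u\trans q$. For $v\in\cT$ with $v\ne0$ this is strictly positive, since $q\in\simplex^\circ$ and $v$ is not constant. Hence the objective is coercive on $\cT$ and the supremum is attained.

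As an alternative route to existence, the primal optimizer is strictly positive when $q,p>0$, and its Sinkhorn scaling form supplies the potential explicitly.
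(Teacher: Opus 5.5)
Your proof is correct, but it is organized differently from the paper's.

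\textbf{Paper's route.} The paper dualizes \emph{both} marginal constraints. It uses the scalar identity $\inf_{x\ge0}\{\gamma x(\log x-1)+(C_{ji}-u_j-v_i)x\}=-\gamma e^{(u_j+v_i-C_{ji})/\gamma}$ to obtain the full two-potential Sinkhorn dual. It then eliminates the column potential through its first-order condition $v_i=\gamma\log p_i-\gamma\log\sum_j e^{(u_j-C_{ji})/\gamma}$. The duality itself is asserted without further justification. For attainment of the maximizing potential, the paper fixes the gauge and appeals to the finite Sinkhorn scaling theorem, equivalently to the existence of the positive entropic plan.

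\textbf{Your route.} You dualize only the row constraint, keep $\Pi\trans\one=p$ inside, and solve the column-wise Gibbs problem directly. Sion's theorem on the compact set $\{\Pi\ge0:\Pi\trans\one=p\}$ justifies the min--sup exchange. Your existence argument uses the recession function $\max_j v_j-v\trans q$ on $\cT$. It is elementary and shows exactly where $q\in\simplex^\circ$ enters. For boundary $q$ with $q_k=0$, this function vanishes along the nonzero direction $-e_k+m^{-1}\one\in\cT$, and the optimal potential escapes to $-\infty$ in that coordinate. The uniqueness argument is the same in both proofs: the Hessian kernel is $\operatorname{span}\{\one\}$, combined with invariance under $u\mapsto u+c\one$.

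\textbf{What each buys.} Your version is self-contained, and it yields the primal optimizer in the form $\Pi_{ji}=p_i s_{ji}(u)$ directly as the inner minimizer. That is exactly the representation used later in the plan-KL proof. The paper's version exhibits the full pair $(u,v)$ and hence the scaling form $\diag(a)K\diag(b)$ immediately. That form is what the later self-potential and curvature lemmas and the Sinkhorn implementation work with.
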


\begin{proof}
Introduce row and column potentials $u,v$ for the two marginal constraints. The scalar identity
\[
\inf_{x\ge0}\left\{\gamma x(\log x-1)+(C_{ji}-u_j-v_i)x\right\}
=-\gamma\exp((u_j+v_i-C_{ji})/\gamma)
\]
gives
\[
T_\gamma(q,p)=\sup_{u,v}
\left\{u\trans q+v\trans p
-\gamma\sum_{j,i}e^{(u_j+v_i-C_{ji})/\gamma}\right\}.
\]
For fixed $u$, first-order optimality in $v_i$ gives
$v_i=\gamma\log p_i-\gamma\log\sum_j e^{(u_j-C_{ji})/\gamma}$.
Substitution proves \eqref{eq:semidual-app}; differentiation gives \eqref{eq:grad-A-app}.

Every $s_i(u)$ has full support. Hence for any nonconstant $z$,
\[
z\trans\nabla^2A_{\gamma,p}(u)z
=\gamma^{-1}\sum_i p_i\operatorname{Var}_{s_i(u)}(z)>0.
\]
The Hessian nullspace is exactly $\operatorname{span}\{\one\}$. After fixing, for example, the gauge $\one\trans u=0$, strict convexity and the usual finite Sinkhorn scaling theorem yield a unique optimizer. Equivalently, the unique positive entropic plan determines both potentials up to the common additive gauge.
\end{proof}

The strict positivity of the Gibbs kernel and $p\in\simplex^\circ$ imply that the self-plan $\Pi_\gamma^{p,p}$ has strictly positive entries. The value $T_\gamma(q,p)$ and its unique optimizer are continuous in $(q,p)$ on the closed product of simplices; this follows directly from compactness of the coupling correspondence, continuity of the objective with $0\log0=0$, and uniqueness of the optimizer. On compact subsets of the interior, the value is $C^\infty$ by \cref{lem:semidual} and the implicit-function theorem on the tangent space.

\section{Proofs for the raw center and tangent geometry}\label{app:geometry-proofs}

\subsection{Proof of \texorpdfstring{\cref{prop:raw-center}}{the raw-center proposition}}

Minimizing over $q$ removes the row-sum constraint, so the problem separates over columns subject only to $\Pi\trans\one=p$. For $p_i>0$, write $\Pi_{ji}=p_i s_j$ with $s\in\simplex$. The column objective is
\[
p_i\sum_j s_j C_{ji}
+\gamma p_i\left(\log p_i-1+\sum_j s_j\log s_j\right).
\]
Its unique minimizer is $s_j=K_{ji}/Z_i$. (A column with $p_i=0$ is identically zero.) Averaging the resulting column probabilities with weights $p_i$ proves \eqref{eq:raw-center}. The minimum value, useful for checking implementations, is
\begin{equation}\label{eq:raw-min-value-app}
\min_qT_\gamma(q,p)
=\gamma\sum_i p_i(\log p_i-1-\log Z_i),
\end{equation}
with the convention $0\log0=0$. Since $K_{ji}\to1$ as $\gamma\to\infty$, every normalized column converges to $m^{-1}\one$.

For the low-temperature bound, the mass that the $i$th Gibbs column assigns away from $i$ is at most
$\sum_{j\ne i}e^{-C_{ji}/\gamma}\le(m-1)e^{-\Delta_C/\gamma}$.
The $\ell_1$ distance between that column and the $i$th point mass is twice its off-diagonal mass. Averaging over $p_i$ proves the claim.

\subsection{Proof of \texorpdfstring{\cref{prop:canonical}}{the canonical-centering proposition}}

Because $p$ lies in the relative interior, it minimizes the differentiable convex function $D_a(\cdot,p)$ on $\simplex$ if and only if its relative directional derivative vanishes in every $v\in\cT$. That condition is
$(\nu_{\gamma,p}-a)\trans v=0$ for every $v\in\cT$, equivalently
$a-\nu_{\gamma,p}\in\cT^\perp=\operatorname{span}\{\one\}$.
Strict convexity of $F_{\gamma,p}$ then makes the minimizer unique. The constant term is already forced by the normalization $D_a(p,p)=0$.

\subsection{Proof of \texorpdfstring{\cref{thm:plan-kl}}{the plan-KL theorem}}

First let $q\in\simplex^\circ$, and let $u_q$ and $\nu=\nu_{\gamma,p}$ be semi-dual potentials satisfying
$\nabla A_{\gamma,p}(u_q)=q$ and $\nabla A_{\gamma,p}(\nu)=p$.
Up to the constant in \eqref{eq:semidual-app}, $F_{\gamma,p}=A_{\gamma,p}^*$. The standard conjugate-Bregman identity gives
\begin{equation}\label{eq:conjugate-bregman-app}
D_{F_{\gamma,p}}(q,p)=D_{A_{\gamma,p}}(\nu,u_q).
\end{equation}
For a single column, direct expansion of the log-partition function gives
\begin{align*}
&\gamma\log\sum_j e^{(\nu_j-C_{ji})/\gamma}
-\gamma\log\sum_j e^{(u_{q,j}-C_{ji})/\gamma}
-s_i(u_q)\trans(\nu-u_q)\\
&\hspace{1.1in}=\gamma\KL(s_i(u_q)\Vert s_i(\nu)).
\end{align*}
Multiplying by $p_i$ and summing over columns shows
\begin{equation}\label{eq:conditional-plan-kl-app}
D_\gamma^{\TC}(q\Vert p)
=\gamma\sum_i p_i\KL(s_i(u_q)\Vert s_i(\nu)).
\end{equation}
The optimal plans satisfy
$\Pi_{\gamma,ji}^{q,p}=p_i s_{ji}(u_q)$ and
$\Pi_{\gamma,ji}^{p,p}=p_i s_{ji}(\nu)$, so \eqref{eq:conditional-plan-kl-app} is exactly \eqref{eq:plan-kl}.

The identical calculation with any $r\in\simplex^\circ$ and its semi-dual potential $u_r$ in place of $(p,\nu)$ gives
\[
B_{F_{\gamma,p}}(q,r)
=\gamma\KL(\Pi_\gamma^{q,p}\Vert\Pi_\gamma^{r,p}).
\]
Data processing under the row-marginal map gives the first inequality in \eqref{eq:global-plan-kl}; Pinsker gives the second. Hence $F_{\gamma,p}$, and therefore its affine shift $D_\gamma^{\TC}(\cdot\Vert p)$, is $\gamma$-strongly convex with respect to $\ell_1$ in the Bregman sense.

For boundary $q$, take $q_t=(1-t)q+tp$ and let $t\downarrow0$. Continuity of the entropic value and optimizer gives
$\Pi_\gamma^{q_t,p}\to\Pi_\gamma^{q,p}$. Since the reference self-plan is strictly positive and $x\mapsto x\log x$ is continuous at zero, both sides of \eqref{eq:plan-kl} converge, proving the identity on the closed simplex.

Apply the KL chain rule to the row marginal map $(j,i)\mapsto j$:
\[
\KL(\Pi_\gamma^{q,p}\Vert\Pi_\gamma^{p,p})
=\KL(q\Vert p)
+\sum_{j:q_j>0}q_j
\KL(\Pi_\gamma^{q,p}(\cdot\mid j)
\Vert\Pi_\gamma^{p,p}(\cdot\mid j)).
\]
This proves \eqref{eq:chain-kl} and the first inequality in \eqref{eq:kl-tv-lower}; Pinsker's inequality proves the second. Convexity follows directly from the defining Bregman remainder in its first argument. Continuity and compactness of the simplex give compact sublevel sets, and strict positivity of KL away from equality gives exact centering.

\subsection{Proof of \texorpdfstring{\cref{prop:reference-invariance}}{the reference-invariance proposition} and arbitrary fixed references}

For a fixed $R\in\R_{++}^{m\times m}$, define
\begin{equation}\label{eq:reference-transport-app}
T_{\gamma,R}(q,p)=\min_{\Pi\in\Gamma(q,p)}
\left\{\inner{C}{\Pi}
+\gamma\sum_{j,i}\Pi_{ji}\left(\log\frac{\Pi_{ji}}{R_{ji}}-1\right)\right\}.
\end{equation}
If $R'_{ji}=r_jR_{ji}c_i$, then
\begin{equation}\label{eq:reference-affine-app}
T_{\gamma,R'}(q,p)
=T_{\gamma,R}(q,p)
-\gamma\sum_jq_j\log r_j
-\gamma\sum_i p_i\log c_i.
\end{equation}
The first added term is affine in the varying marginal and the second is constant. Both disappear from the anchored Bregman remainder. Adding a cost $a_j+b_i$ is equivalent to multiplying $R$ by $e^{-a_j/\gamma}e^{-b_i/\gamma}$ and gives the same conclusion.

The semi-dual for \eqref{eq:reference-transport-app} is obtained by replacing each exponential in \eqref{eq:A} by $R_{ji}$ times that exponential. The proof of \cref{thm:plan-kl} is unchanged, so the exact plan-KL identity holds for every fixed positive $R$. The high-temperature KL limit in \cref{thm:limits}, however, uses the counting/separable-reference structure and should not be transferred verbatim to an arbitrary nonseparable $R$.

\section{Proof of the robust dual}\label{app:dual-proof}

With the normalized gauge $p\trans\nu=T_\gamma(p,p)$ and $\bar C_{ji}=C_{ji}-\nu_j$, the constraint
$D_\gamma^{\TC}(q\Vert p)\le\rho$ is equivalent to the existence of $\Pi\trans\one=p$ such that
\begin{equation}\label{eq:coupling-budget-app}
\inner{\bar C}{\Pi}+\gamma\sum_{j,i}\Pi_{ji}(\log\Pi_{ji}-1)\le\rho.
\end{equation}
Indeed, minimizing the left side over plans with first marginal $q$ gives
$T_\gamma(q,p)-\nu\trans q$, while the gauge makes the right-side offset
$T_\gamma(p,p)-\nu\trans p$ equal to zero.

Attach $\lambda\ge0$ to \eqref{eq:coupling-budget-app} and $\beta_i$ to each column constraint. The Lagrangian for the maximization problem is
\begin{align*}
\mathcal L(\Pi,\lambda,\beta)
={}&\lambda\rho+p\trans\beta\\
&+\sum_{j,i}\left[\Pi_{ji}(L_j-\beta_i-\lambda\bar C_{ji})
-\lambda\gamma\Pi_{ji}(\log\Pi_{ji}-1)\right].
\end{align*}
For $\lambda>0$,
\[
\sup_{x\ge0}\{ax-\lambda\gamma x(\log x-1)\}
=\lambda\gamma e^{a/(\lambda\gamma)}.
\]
Therefore the dual before eliminating $\beta$ is
\begin{equation}\label{eq:dual-beta-app}
\inf_{\lambda\ge0,\,\beta\in\R^m}
\left\{\lambda\rho+p\trans\beta
+\lambda\gamma\sum_{j,i}
e^{(L_j-\beta_i-\lambda\bar C_{ji})/(\lambda\gamma)}\right\},
\end{equation}
with the lower-semicontinuous boundary interpretation at $\lambda=0$. The nominal self-plan is strictly positive and has budget zero, so $\rho>0$ gives Slater slack. Strong duality follows.

For fixed $(\theta,\lambda)$, set
$S_i=\sum_j e^{(L_j-\lambda\bar C_{ji})/(\lambda\gamma)}$.
The terms involving $\beta_i$ are
$p_i\beta_i+\lambda\gamma S_i e^{-\beta_i/(\lambda\gamma)}$.
Their minimizer is $\beta_i^*=\lambda\gamma\log(S_i/p_i)$ and their minimum is
$\lambda\gamma p_i[1+\log(S_i/p_i)]$.
Summing proves \eqref{eq:dual}.

At a positive optimal multiplier, maximizing each scalar Lagrangian term gives
$\Pi_{ji}^*=e^{(L_j-\beta_i^*-\lambda^*\bar C_{ji})/(\lambda^*\gamma)}$.
Substituting $\beta_i^*$ proves \eqref{eq:worst-plan}; strict concavity in $\Pi$ gives uniqueness. The exponential perspective in \eqref{eq:dual-beta-app} is jointly convex. Composing its increasing argument with convex $L_j(\theta)$ and partially minimizing over $\beta$ preserves convexity in $(\theta,\lambda)$. The gradient statement follows from Danskin's theorem; in the nonunique case, under convex losses, every maximizing distribution gives a convex subgradient.

For the fixed reference $R$ in \eqref{eq:reference-transport-app}, the same derivation inserts $R_{ji}$ in every exponential and in the numerator and denominator of \eqref{eq:worst-plan}.

\section{Proofs of the temperature limits}\label{app:limit-proofs}

\subsection{Low temperature}

Every coupling has total mass one on $m^2$ entries, so
$-2\log m\le\sum_{j,i}\Pi_{ji}\log\Pi_{ji}\le0$. Hence
\begin{equation}\label{eq:T-W-basic-app}
W_C(q,p)-\gamma(1+2\log m)
\le T_\gamma(q,p)\le W_C(q,p)-\gamma.
\end{equation}

\begin{lemma}[Bounded self-potential]\label{lem:self-potential-app}
Under the low-temperature assumptions of \cref{thm:limits}, the self-plan has a symmetric representation
\[
\Pi_{\gamma,ji}^{p,p}
=\exp((u_{\gamma,j}+u_{\gamma,i}-C_{ji})/\gamma)
\]
with
\begin{equation}\label{eq:self-potential-bound-app}
\gamma(\log p_j-\tfrac12\log m)
\le u_{\gamma,j}\le\tfrac\gamma2\log p_j,
\qquad
\norm{u_\gamma}_\infty
\le\gamma\log(\sqrt m/p_{\min}).
\end{equation}
\end{lemma}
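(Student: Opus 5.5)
We prove \cref{lem:self-potential-app} by passing to the symmetric Sinkhorn fixed point and then reading off bounds from the two marginal equations.

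\textbf{Symmetric representation.} The self-plan $\Pi_\gamma^{p,p}$ is the unique minimizer of a strictly convex problem whose data are invariant under transposition when $C=C\trans$. Since $(\Pi^{p,p}_\gamma)\trans\in\Gamma(p,p)$ has the same objective value, uniqueness forces $\Pi_\gamma^{p,p}$ to be symmetric. By the dual derivation in the proof of \cref{lem:semidual}, the plan has the form $\exp((a_j+b_i-C_{ji})/\gamma)$, with $(a,b)$ unique up to $(a+c\one,b-c\one)$. Symmetry of the plan and of $C$ gives
\[
a_j+b_i=a_i+b_j \quad\text{for all } i,j,
\]
so $a-b$ is a constant vector $2c\one$. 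Shifting the gauge by $c$ yields $a=b=:u_\gamma$.

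\textbf{Upper bound.} Look at the diagonal entry. Since $C_{jj}=0$,
\[
\Pi_{jj}=e^{2u_{\gamma,j}/\gamma}\le \sum_i\Pi_{ji}=p_j .
\]
This gives $u_{\gamma,j}\le\frac\gamma2\log p_j$.

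\textbf{Lower bound.} Use the marginal equation together with $C\ge0$ and the upper bound just proved:
\[
p_j=e^{u_{\gamma,j}/\gamma}\sum_i e^{(u_{\gamma,i}-C_{ji})/\gamma}\le e^{u_{\gamma,j}/\gamma}\sum_i \sqrt{p_i}.
\]
Cauchy--Schwarz gives $\sum_i\sqrt{p_i}\le\sqrt m$. Hence $e^{u_{\gamma,j}/\gamma}\ge p_j/\sqrt m$, that is, $u_{\gamma,j}\ge\gamma(\log p_j-\frac12\log m)$.

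\textbf{Sup-norm bound.} Combine the two sides using $p_{\min}\le p_j\le1$. From the upper bound, $u_{\gamma,j}\le0$. From the lower bound, $-u_{\gamma,j}\le\gamma\log(\sqrt m/p_j)\le\gamma\log(\sqrt m/p_{\min})$. Hence $\norm{u_\gamma}_\infty\le\gamma\log(\sqrt m/p_{\min})$.

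The only delicate step is the gauge argument: we need to show that symmetry of the plan forces symmetric potentials, not just symmetric sums $a_j+b_i$. The difference argument in the first paragraph handles this cleanly, because uniqueness of the potentials modulo the common gauge comes from \cref{lem:semidual}. Once the potentials are symmetric, the bounds follow from the diagonal entry and the marginal constraint alone.
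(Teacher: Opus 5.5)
Your proposal is correct and follows the paper's proof essentially step for step: you get symmetric potentials from uniqueness of the self-plan, the upper bound from the diagonal entry with $C_{jj}=0$, and the lower bound from the row-marginal equation with $C\ge0$ and $\sum_i\sqrt{p_i}\le\sqrt m$. Your gauge argument spells out the paper's one-line remark. It needs only the symmetry of the unique plan and the existence of some potential pair, not uniqueness of the potentials modulo the gauge.
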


\begin{proof}
Symmetry and uniqueness of the self-plan allow equal row and column potentials. Put
$a_j=e^{u_{\gamma,j}/\gamma}$. Then
$\Pi_{ji}=a_ja_i e^{-C_{ji}/\gamma}$ and
$p_j=a_j\sum_i a_i e^{-C_{ji}/\gamma}$.
The diagonal term gives $p_j\ge a_j^2$, so $a_j\le\sqrt{p_j}$. Conversely,
\[
p_j\le a_j\sum_i a_i
\le a_j\sum_i\sqrt{p_i}\le a_j\sqrt m.
\]
Thus $p_j/\sqrt m\le a_j\le\sqrt{p_j}$. Taking logarithms gives the result.
\end{proof}

The row self-potential is a valid representative of $\nu_{\gamma,p}$. Combining \eqref{eq:T-W-basic-app}, $W_C(p,p)=0$, \eqref{eq:self-potential-bound-app}, and $\norm{q-p}_1\le2$ gives
\begin{align*}
\abs{D_\gamma^{\TC}(q\Vert p)-W_C(q,p)}
&\le \abs{T_\gamma(q,p)-W_C(q,p)}+\abs{T_\gamma(p,p)}
+\abs{u_\gamma\trans(q-p)}\\
&\le2\gamma[1+2\log m+\log(\sqrt m/p_{\min})],
\end{align*}
which proves \eqref{eq:low-bound}.

For completeness, let $V_W(r;L)=\sup_{W_C(q,p)\le r}q\trans L$. Uniform closeness of the two discrepancies gives
\[
\{W_C\le\rho-\varepsilon_\gamma\}
\subseteq\cU_{\rho,\gamma}^{\TC}(p)
\subseteq\{W_C\le\rho+\varepsilon_\gamma\}.
\]
If $\norm{L}_\infty\le B$, contract any $q$ feasible at radius $\rho+s$ toward $p$ by the factor $\rho/(\rho+s)$. Convexity of $W_C(\cdot,p)$ makes the contraction feasible at radius $\rho$, while its objective changes by at most $2Bs/\rho$. The analogous contraction from $\rho$ to $\rho-s$ gives
$\abs{V_W(\rho\pm s;L)-V_W(\rho;L)}\le2Bs/\rho$ for $s<\rho$. Taking $s=\varepsilon_\gamma$ proves \eqref{eq:risk-low-bound}.

\subsection{High temperature}

Let $h(q)=\sum_jq_j\log q_j$. For any $\Pi\in\Gamma(q,p)$,
\begin{equation}\label{eq:entropy-decomp-app}
\sum_{j,i}\Pi_{ji}(\log\Pi_{ji}-1)
=h(q)+h(p)-1+\KL(\Pi\Vert qp\trans).
\end{equation}
Therefore
\begin{equation}\label{eq:T-high-decomp-app}
T_\gamma(q,p)=\gamma[h(q)+h(p)-1]+R_\gamma(q,p),
\end{equation}
where
\[
R_\gamma(q,p)=\min_{\Pi\in\Gamma(q,p)}
\{\inner{C}{\Pi}+\gamma\KL(\Pi\Vert qp\trans)\}.
\]
Writing $M_C=\max_{j,i}C_{ji}$, the independent coupling gives the upper bound
$R_\gamma(q,p)\le q\trans Cp$. For any feasible $\Pi$, Pinsker gives
\[
\inner{C}{\Pi}+\gamma\KL(\Pi\Vert qp\trans)
\ge q\trans Cp-M_Ct+\tfrac\gamma2t^2,
\quad t=\norm{\Pi-qp\trans}_1.
\]
Minimizing the right side over $t\ge0$ yields
\begin{equation}\label{eq:R-bound-app}
q\trans Cp-\frac{M_C^2}{2\gamma}
\le R_\gamma(q,p)\le q\trans Cp
\end{equation}
uniformly on the closed simplex.

Set $f_\gamma(q)=\gamma^{-1}T_\gamma(q,p)$. Equations
\eqref{eq:T-high-decomp-app}--\eqref{eq:R-bound-app} give
\begin{equation}\label{eq:f-high-uniform-app}
f_\gamma(q)=h(q)+h(p)-1+\gamma^{-1}q\trans Cp+O(\gamma^{-2})
\end{equation}
uniformly in $q$. In particular, $f_\gamma\to f_\infty:=h(\cdot)+h(p)-1$ uniformly. We next justify convergence of normalized relative gradients at the fixed interior point $p$. For $v\in\cT$, choose $t>0$ with $p\pm tv\in\simplex^\circ$. Convexity gives
\[
\frac{f_\gamma(p)-f_\gamma(p-tv)}{t}
\le \nabla f_\gamma(p)\trans v
\le\frac{f_\gamma(p+tv)-f_\gamma(p)}{t}.
\]
First let $\gamma\to\infty$ using uniform convergence, then let $t\downarrow0$. The two bounds meet at
$\nabla h(p)\trans v$. Finite dimensionality implies convergence of the normalized relative gradients on $\cT$. Hence
\begin{align*}
\sup_{q\in\simplex}\Bigl|
&[f_\gamma(q)-f_\gamma(p)-\nabla f_\gamma(p)\trans(q-p)]\\
&-[h(q)-h(p)-\nabla h(p)\trans(q-p)]\Bigr|
\longrightarrow0.
\end{align*}
The second bracket is $\KL(q\Vert p)$, including at boundary points by continuity. This proves \eqref{eq:high-limit}.

Let $g_\gamma=D_\gamma^{\TC}/\gamma$ and $g=\KL(\cdot\Vert p)$. Uniform convergence on compact $\simplex$ implies that any limit point of
$\{g_\gamma\le\tau\}$ lies in $\{g\le\tau\}$. Conversely, if $g(q)<\tau$, then $q$ is eventually feasible. If $g(q)=\tau>0$, use
$q_t=(1-t)q+tp$: strict convexity along this segment gives
$g(q_t)<\tau$, and one may choose $t=t_\gamma\downarrow0$ slowly enough that $q_{t_\gamma}$ is feasible for $g_\gamma$. Compactness turns these two facts into Hausdorff convergence. Uniform convergence of bounded linear support functions follows by the same compactness argument.

\section{Proofs for local geometry and calibration}\label{app:calibration-proofs}

\subsection{Proof of \texorpdfstring{\cref{thm:hessian}}{the local-metric theorem}}

By \eqref{eq:grad-A-app},
$\nabla^2A_{\gamma,p}(\nu)=\gamma^{-1}M_{\gamma,p}$.
For nonzero $v\in\cT$, every $s_i(\nu)$ has full support and
\[
v\trans M_{\gamma,p}v
=\sum_i p_i\operatorname{Var}_{s_i(\nu)}(v)>0.
\]
Since $T_\gamma(\cdot,p)$ is $A_{\gamma,p}^*$ up to a constant, their intrinsic Hessians are inverses, proving
$H_{\gamma,p}=\gamma M_{\gamma,p}^\dagger$ and the Taylor expansion.

There are two equivalent proofs of \eqref{eq:fisher-lower}. Differentiating the global inequality
$D_\gamma^{\TC}(q\Vert p)\ge\gamma\KL(q\Vert p)$ twice at $q=p$ gives it immediately. Alternatively, under the self-plan let $J$ be the row state and $I$ the column state. Both marginals are $p$, and
$M_{\gamma,p}=\E[\operatorname{Cov}(e_J\mid I)]$. The law of total covariance gives
\[
M_{\gamma,p}\preceq\operatorname{Cov}(e_J)
=\diag(p)-pp\trans.
\]
Inversion on $\cT$ reverses the Loewner order, yielding
$v\trans H_{\gamma,p}v\ge\gamma\sum_jv_j^2/p_j$.

\subsection{An explicit curvature bound}

\begin{lemma}[Uniform interior curvature]\label{lem:explicit-curvature}
Let $\simplex(\epsilon)=\{q\in\simplex:q_j\ge\epsilon\}$ and $M_C=\max C_{ji}$. For $q,p\in\simplex(\epsilon)$,
\begin{equation}\label{eq:curvature-explicit-app}
\norm{\nabla_{11}^2T_\gamma(q,p)}_{\mathrm{op},\cT}
\le\frac{\gamma e^{2M_C/\gamma}}{\epsilon}.
\end{equation}
\end{lemma}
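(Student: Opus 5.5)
We will bound the intrinsic Hessian through its dual description. By \cref{lem:semidual} and the argument in the proof of \cref{thm:hessian}, at any interior $q$ the intrinsic Hessian of $T_\gamma(\cdot,p)$ is $\gamma M(q)^\dagger$ on $\cT$. Here
\[
M(q)=\sum_i p_i\bigl[\diag(s_i(u_q))-s_i(u_q)s_i(u_q)\trans\bigr],
\]
and $u_q$ is the semi-dual potential with $\sum_i p_i s_i(u_q)=q$. So it suffices to show $v\trans M(q)v\ge \epsilon e^{-2M_C/\gamma}\norm{v}_2^2$ for $v\in\cT$. Inverting on $\cT$ then gives the operator-norm bound \eqref{eq:curvature-explicit-app}.

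The key step is a lower bound on the conditional softmax entries $s_{ji}(u_q)=\Pi^{q,p}_{\gamma,ji}/p_i$. Since $C_{ji}\in[0,M_C]$, the ratio $s_{ji}/s_{ji'}=e^{(C_{ji'}-C_{ji})/\gamma}$ lies in $[e^{-M_C/\gamma},e^{M_C/\gamma}]$. Hence every column satisfies $s_{ji}\ge e^{-M_C/\gamma}s_{ji'}$ for all $i,i'$. Averaging with weights $p_{i'}$ gives
\[
s_{ji}\ge e^{-M_C/\gamma}\sum_{i'}p_{i'}s_{ji'}=e^{-M_C/\gamma}q_j\ge \epsilon e^{-M_C/\gamma}.
\]
This uses only $q\in\simplex(\epsilon)$. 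The hypothesis on $p$ is then not even needed, or serves only to ensure interiority.

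Next we lower-bound each conditional variance. For a distribution $s$ with $s_j\ge\eta$ for all $j$, and $v\in\cT$,
\[
\operatorname{Var}_s(v)=\min_c\sum_j s_j(v_j-c)^2\ge\eta\min_c\sum_j(v_j-c)^2=\eta\norm{v}_2^2,
\]
where the last equality holds because $\one\trans v=0$ makes $c=0$ optimal. Taking $\eta=\epsilon e^{-M_C/\gamma}$ and summing with weights $p_i$ gives $v\trans M(q)v\ge \epsilon e^{-M_C/\gamma}\norm{v}_2^2$. This yields the Hessian bound $\gamma e^{M_C/\gamma}/\epsilon$, which is stronger than the stated $\gamma e^{2M_C/\gamma}/\epsilon$ and therefore implies it.

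The main point to check is the identification of the intrinsic Hessian at a general interior $q$, not just at $q=p$, with $\gamma M(q)^\dagger$. This follows exactly as in \cref{thm:hessian}: $\nabla A_{\gamma,p}$ is a diffeomorphism from $\R^m/\operatorname{span}\{\one\}$ onto $\simplex^\circ$, and conjugate Hessians are inverse on $\cT$. If any looseness arises in the ratio argument, for instance from bounding $s_{ji}$ via normalizing constants $Z_i$ instead, the extra factor $e^{M_C/\gamma}$ in the stated bound absorbs it.
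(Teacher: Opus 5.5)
Your architecture is the same as the paper's. You identify the intrinsic Hessian at a general interior $q$ with $\gamma M(q)^\dagger$, lower-bound every conditional softmax entry $s_{ji}(u_q)$, and turn that into $\operatorname{Var}_{s_i}(v)\ge\eta\norm{v}_2^2$ on $\cT$. The gap is in the entrywise step. The identity $s_{ji}/s_{ji'}=e^{(C_{ji'}-C_{ji})/\gamma}$ is false, because $s_{ji}$ and $s_{ji'}$ lie in different columns and have different normalizers:
\[
\frac{s_{ji}}{s_{ji'}}=e^{(C_{ji'}-C_{ji})/\gamma}\,\frac{Z_{i'}(u_q)}{Z_i(u_q)},
\qquad Z_i(u)=\sum_k e^{(u_k-C_{ki})/\gamma}.
\]
The ratio $Z_{i'}/Z_i$ itself ranges over $[e^{-M_C/\gamma},e^{M_C/\gamma}]$. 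Consequently your intermediate claim $s_{ji}\ge e^{-M_C/\gamma}q_j$ fails in general. Take $m=2$, $C=\bigl(\begin{smallmatrix}0&M_C\\ M_C&0\end{smallmatrix}\bigr)$, $\kappa=e^{-M_C/\gamma}<1$, $p=(\tfrac12,\tfrac12)$, and $(e^{u_1/\gamma},e^{u_2/\gamma})=(r,1)$ with $r\downarrow0$. Then $s_{11}=r/(r+\kappa)$ and $s_{12}=r\kappa/(1+r\kappa)$, so $s_{12}/s_{11}\to\kappa^2$, not $\kappa$. Moreover $q_1\to0$ becomes the smallest coordinate of both $p$ and $q$, and $s_{12}/q_1\to 2\kappa^2/(1+\kappa^2)<\kappa$. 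So $\min_{j,i}s_{ji}<\epsilon\kappa$ with $\epsilon=q_1$. Your claimed improvement to $\gamma e^{M_C/\gamma}/\epsilon$ is therefore unsupported. Moreover, no argument that bounds $\lambda_{\min}(M)$ by the smallest entry of the $s_i$ can deliver it.

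The repair is the one your closing hedge gestures at, but it has to be the main line, and it gives no improvement. With the normalizer ratio included, $s_{ji}\ge e^{-2M_C/\gamma}s_{ji'}$. Averaging over $i'$ with weights $p_{i'}$ then gives $s_{ji}\ge e^{-2M_C/\gamma}q_j\ge\epsilon e^{-2M_C/\gamma}$. Your variance step then produces exactly the stated $\gamma e^{2M_C/\gamma}/\epsilon$.

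This corrected column comparison is a compact alternative to the paper's bookkeeping. The paper writes $\Pi=\diag(a)K\diag(b)$, uses total mass to get $AB\le e^{M_C/\gamma}$, and uses the row constraint to get $a_j\ge\epsilon/B$. It then bounds $s_{ji}=a_jK_{ji}/\sum_ka_kK_{ki}\ge a_je^{-M_C/\gamma}/A\ge\epsilon e^{-2M_C/\gamma}$. Both routes lose one factor $e^{-M_C/\gamma}$ to a kernel entry. The second factor is lost to the normalizer ratio in your route and to the bound $AB\le e^{M_C/\gamma}$ in the paper's. Your remark that $p\in\simplex(\epsilon)$ serves only to guarantee interiority matches the paper's proof, which likewise uses only $q_j\ge\epsilon$.
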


\begin{proof}
Let $K_{ji}=e^{-C_{ji}/\gamma}$ and $\kappa=e^{-M_C/\gamma}$, so
$\kappa\le K_{ji}\le1$. Write the positive optimal plan as
$\Pi=\diag(a)K\diag(b)$ and set $A=\sum_ja_j$, $B=\sum_i b_i$. Total mass gives
$1=\sum_{j,i}a_jK_{ji}b_i\ge\kappa AB$, hence $AB\le\kappa^{-1}$. The row constraint gives
$q_j=a_j\sum_iK_{ji}b_i\le a_jB$, so $a_j\ge\epsilon/B$. Therefore every conditional column probability satisfies
\[
s_{ji}=\frac{a_jK_{ji}}{\sum_ka_kK_{ki}}
\ge\frac{(\epsilon/B)\kappa}{A}
\ge\epsilon\kappa^2.
\]
For $v\in\cT$,
\[
\operatorname{Var}_{s_i}(v)
=\min_c\sum_j s_{ji}(v_j-c)^2
\ge\epsilon\kappa^2\min_c\sum_j(v_j-c)^2
=\epsilon\kappa^2\norm{v}_2^2.
\]
Thus the log-partition covariance is bounded below by
$\epsilon\kappa^2I$ on $\cT$. Inverting it in the Hessian formula proves the claim.
\end{proof}

\subsection{Proof of the finite-sample statement in \texorpdfstring{\cref{thm:calibration}}{the calibration theorem}}

Let $\widehat p_n=N/n$. A coordinatewise Hoeffding bound and union bound give, with probability at least $1-\delta$,
\begin{equation}\label{eq:hoeffding-app}
\norm{\widehat p_n-p^\star}_2
\le\sqrt{\frac{m}{2n}\log\frac{2m}{\delta}}.
\end{equation}
The smoothed estimator obeys
\[
\widetilde p_n-p^\star
=\frac{n}{n+ma}(\widehat p_n-p^\star)
+\frac{a}{n+ma}(\one-mp^\star).
\]
The maximum of $\norm{\one-mp}_2$ over the simplex is attained at a vertex and equals $\sqrt{m(m-1)}$. Thus \eqref{eq:hoeffding-app} implies
$\norm{\widetilde p_n-p^\star}_2\le r_{n,\delta,a}$.
If $r_{n,\delta,a}\le\underline p/2$, the entire segment between $p^\star$ and $\widetilde p_n$ belongs to
$\simplex(\underline p/2)$.

For fixed second marginal $\widetilde p_n$, the discrepancy is the Taylor remainder of
$q\mapsto T_\gamma(q,\widetilde p_n)$ about $q=\widetilde p_n$. By \cref{lem:explicit-curvature},
\[
D_\gamma^{\TC}(p^\star\Vert\widetilde p_n)
\le\frac12\frac{\gamma e^{2M_C/\gamma}}{\underline p/2}
\norm{p^\star-\widetilde p_n}_2^2
\le\frac{\gamma e^{2M_C/\gamma}}{\underline p}r_{n,\delta,a}^2.
\]
This proves \eqref{eq:fs-radius}.

\subsection{Proof of the generalized chi-square limit}

The add-$a$ bias is $O(n^{-1})$, so the multinomial central limit theorem gives
\[
\sqrt n(\widetilde p_n-p^\star)\Rightarrow
Z\sim\mathcal N(0,\Sigma^\star).
\]
For each realized interior nominal $r$, $D_\gamma^{\TC}(q\Vert r)$ is the Bregman remainder in $q$ about $r$. Entropic OT is $C^\infty$ on a common interior neighborhood of $p^\star$, so Taylor expansion uniformly for $r\to p^\star$ gives
\begin{equation}\label{eq:random-taylor-app}
D_\gamma^{\TC}(p^\star\Vert\widetilde p_n)
=\frac12(p^\star-\widetilde p_n)\trans
H_{\gamma,\widetilde p_n}(p^\star-\widetilde p_n)
+o_p(n^{-1}).
\end{equation}
Continuity of $H_{\gamma,p}$ and Slutsky's theorem prove \eqref{eq:asymptotic}.

Let $\lambda_1,\ldots,\lambda_{m-1}$ be the positive eigenvalues of
$(\Sigma^\star)^{1/2}H_{\gamma,p^\star}(\Sigma^\star)^{1/2}$. Diagonalizing this matrix gives
$Z\trans HZ\overset d=\sum_{r=1}^{m-1}\lambda_r\chi_{1,r}^2$.
The distribution has a continuous positive density away from zero, so its positive quantiles are continuity points. The plug-in matrices converge in probability, their nonzero eigenvalues converge, and so do the associated quantiles. If the quantile is estimated from $B_n$ conditional Gaussian draws, conditional Glivenko--Cantelli (or the Dvoretzky--Kiefer--Wolfowitz inequality) makes the empirical quantile error vanish provided $B_n\to\infty$. Combining this fact with \eqref{eq:random-taylor-app} proves plug-in coverage.

Finally, as $\gamma\to\infty$, the self-plan converges to $pp\trans$, so every conditional vector $s_i(\nu_{\gamma,p})$ converges to $p$. Hence
$M_{\gamma,p}\to\diag(p)-pp\trans$ and
\begin{equation}\label{eq:high-hessian-app}
\gamma^{-1}H_{\gamma,p}\to(\diag(p)-pp\trans)^\dagger
\quad\text{on }\cT.
\end{equation}
The resulting normalized quadratic form is $\chi^2_{m-1}$. When $C=0$, the optimal plan is $qp\trans$, so
$D_\gamma^{\TC}=\gamma\KL(q\Vert p)$ and \eqref{eq:high-hessian-app} is exact for every $\gamma$.

\section{Relation to earlier EOT Bregman orientations}\label{app:prior-bregman}

For completeness, we verify \eqref{eq:product-decomposition} and make the orientation distinction algebraic. Let
\[
h_{\gamma,p}(q)
=\min_{\Pi\in\Gamma(q,p)}
\left\{\inner{C}{\Pi}
+\gamma\KL(\Pi\Vert qp\trans)\right\}.
\]
For any $q,p\in\simplex^\circ$ and $\Pi\in\Gamma(q,p)$,
\[
\KL(\Pi\Vert qp\trans)
=\sum_{j,i}\Pi_{ji}\log\Pi_{ji}
-\sum_jq_j\log q_j-\sum_ip_i\log p_i.
\]
Consequently,
\begin{equation}\label{eq:T-h-relation-app}
T_\gamma(q,p)
=h_{\gamma,p}(q)
+\gamma\sum_jq_j\log q_j
+\gamma\sum_ip_i\log p_i-\gamma.
\end{equation}
Taking the forward Bregman remainder of both sides in the first argument, anchored at $p$, proves
\[
D_\gamma^{\TC}(q\Vert p)
=B_{h_{\gamma,p}}(q,p)+\gamma\KL(q\Vert p).
\]
Combining this equality with \eqref{eq:chain-kl} further yields
\[
B_{h_{\gamma,p}}(q,p)
=\gamma\sum_{j:q_j>0}q_j
\KL\!\left(\Pi_\gamma^{q,p}(\cdot\mid j)
\middle\Vert\Pi_\gamma^{p,p}(\cdot\mid j)\right).
\]
The boundary extension follows by the same interior approximation used in the proof of \cref{thm:plan-kl}.

The product-reference Bregman construction appears in \citet{kerimkulov2025mirror}. \citet[Eqs.~49--50]{amari2018information} explicitly introduce both KL orientations between referenced optimal plans, call the second the dual candidate, and then analyze the self-plan-first orientation obtained from their Eq.~49. For symmetric $C$, setting their reference to $p$, transposing the plans to match our row-$q$/column-$p$ convention, and matching regularization scales turns their Eq.~50 exactly into \eqref{eq:plan-kl}. Thus the forward plan-KL candidate is prior art. The results here identify it with the unique affine tangent correction of fixed/counting-reference EOT and derive its conditional/marginal decomposition, DRO reformulation, limits, and calibration. KL asymmetry still matters: $\gamma^{-1}D_\gamma^{\TC}(q\Vert p)\to\KL(q\Vert p)$ and, for symmetric zero-diagonal costs, $D_\gamma^{\TC}(q\Vert p)\to W_C(q,p)$ without rescaling as $\gamma\downarrow0$.

\section{Relation to symmetric Sinkhorn debiasing}\label{app:symmetric}

When $C$ is symmetric, define
\[
S_\gamma(q,p)=T_\gamma(q,p)-\tfrac12T_\gamma(q,q)-\tfrac12T_\gamma(p,p),
\qquad G_\gamma(q)=-\tfrac12T_\gamma(q,q).
\]
Symmetry gives equal first- and second-marginal derivatives on the diagonal, hence
$\nabla G_\gamma(p)=-\nu_{\gamma,p}$ in the relative tangent space. Direct algebra yields
\begin{equation}\label{eq:tc-symmetric-relation-app}
D_\gamma^{\TC}(q\Vert p)
=S_\gamma(q,p)
-\left[G_\gamma(q)-G_\gamma(p)-\nabla G_\gamma(p)\trans(q-p)\right].
\end{equation}
Thus symmetric self-cost subtraction and one-sided tangent subtraction are not interchangeable. The former targets symmetric comparison and an OT-to-kernel limit under additional assumptions; the latter preserves convexity inherited from $T_\gamma(\cdot,p)$ and produces the plan-KL and marginal-KL geometry proved above.

\section{Implementation and reproduction details}\label{app:reproduction}

\subsection{Stable computation}

For production use, the self-plan can be computed by alternating scaling
\[
a\leftarrow p/(Kb),\qquad b\leftarrow p/(K\trans a),
\qquad K_{ji}=e^{-C_{ji}/\gamma},
\]
with log-domain stabilization when the dynamic range of $C/\gamma$ is large. A row potential is $\nu=\gamma\log a$; it is shifted by a constant until
$p\trans\nu=T_\gamma(p,p)$. For fixed $(\theta,\lambda)$ in \eqref{eq:dual}, set
\[
x_{ji}=\frac{L_j(\theta)-\lambda\bar C_{ji}}{\lambda\gamma},
\qquad m_i=\max_jx_{ji}.
\]
Then compute
$\log\sum_je^{x_{ji}}=m_i+\log\sum_je^{x_{ji}-m_i}$ and normalize the same shifted exponentials to obtain \eqref{eq:worst-plan}.

For reference, differentiating the scalar dual objective $\Psi(\theta,\lambda)$ gives
\begin{equation}\label{eq:lambda-derivative-app}
\partial_\lambda\Psi
=\rho+\gamma
+\gamma\sum_i p_i\log\!\left[\frac1{p_i}\sum_je^{x_{ji}}\right]
-\frac1\lambda\sum_i p_i\sum_j\omega_{ji}L_j(\theta),
\end{equation}
where $\omega_{ji}=e^{x_{ji}}/\sum_ke^{x_{ki}}$. The corrected cost cancels from the last expectation after differentiating the scaled exponent. Searching over $\log\lambda$ enforces positivity.

The local covariance can be read directly from the self-plan:
\[
s_i(\nu)=\Pi_{\gamma,\cdot i}^{p,p}/p_i,
\qquad
M_{\gamma,p}=\sum_i p_i[\diag(s_i)-s_is_i\trans].
\]
Thus no differentiation through the Sinkhorn iterations is required.

\subsection{Controlled simulations}

The supplied script \texttt{anc/reproduce.py} uses NumPy for all transport, linear algebra, random generation, and quantiles, and Pillow to draw the figure. It performs the following checks.
\begin{enumerate}[leftmargin=1.5em,itemsep=2pt]
\item It evaluates the raw-center formula \eqref{eq:raw-center} on a logarithmic temperature grid.
\item It computes $D_\gamma^{\TC}$ from the plan-KL identity and computes the one-dimensional unregularized OT cost by the monotone greedy coupling. This produces \cref{fig:geometry}.
\item It minimizes the scalar objective \eqref{eq:dual}, recovers the conditional worst-case plan from \eqref{eq:worst-plan}, and reports its budget residual and primal--dual objective gap.
\item For coverage, it draws one fixed $5000\times5$ bank of squared standard Gaussians. For each of 2,000 fresh multinomial samples per $(n,\gamma)$, it recomputes the add-$1/2$ nominal, self-plan, plug-in Hessian, covariance eigenvalues, conditional $0.95$ quantile, and containment event. Reusing the Gaussian bank is a common-random-number device; the experiment assesses a finite-$B$ approximation, while the theorem requires $B_n\to\infty$ for exact asymptotics.
\item Alternating-scaling marginal residual tolerance is $2\times10^{-13}$, checked every 20 updates, with a maximum of 10,000 iterations. For nearly diagonal symmetric self-plans in the low-temperature figure, the script instead solves the symmetric scaling equations by damped Newton iteration at the same tolerance, with a 1,000-iteration cap. The largest exponent range used is $\max C/\gamma=80$, so direct exponentiation is safe in these experiments; production code should use the log-domain scheme above for more extreme inputs.
\end{enumerate}

\section{Statement on the use of language models}\label{app:llm}

An OpenAI language model assisted with language editing, formatting, manuscript organization, mathematical consistency checks, reference verification, and development of the reproducibility code. The author independently verified all mathematical arguments, citations, and numerical results and takes full responsibility for the manuscript.

\end{document}